\let\my@saved@original@eqref\eqref 
\renewcommand*{\eqref}[1]{
  \begingroup
    \let\normalfont\relax
    \my@saved@original@eqref{#1}
  \endgroup
}
\renewcommand*\env@matrix[1][r]{\hskip -\arraycolsep
  \let\@ifnextchar\new@ifnextchar
  \array{*\c@MaxMatrixCols #1}}
\newcommand{\ord}[1]{\mathcal{O}\left(#1\right)}
\newcommand{\e}{\mathrm{e}}
\newcommand{\pt}{\partial}
\newcommand{\jump}[1]{[\hspace*{-2pt}[#1]\hspace*{-2pt}]}
\newcommand{\eps}{\varepsilon}
\newcommand{\norm}[2]{\|{#1}\|_{#2}}
\newcommand{\bignorm}[2]{\left\| {#1} \right\|  _{#2}}
\newcommand{\tnorm}[1]{\left|\!\!\;\left|\!\!\;\left| {#1}
                       \right|\!\!\;\right|\!\!\;\right|}
\newcommand{\N}{\mathbb{N}}
\newcommand{\rarrow}{\quad\Rightarrow\quad}
\newcommand{\pmtrx}[1]{\ensuremath{\begin{pmatrix}#1 \end{pmatrix}}}
\newcommand{\mA}{\boldsymbol{A}}
\newcommand{\ds}{\,\mathrm{d}s}
\newcommand{\dt}{\,\mathrm{d}t}
\newcommand{\dx}{\,\mathrm{d}x}
\title{Numerical analysis of a singularly perturbed 4th order problem with a shift term}
\author{Sebastian Franz\footnote{
          Institute of Scientific Computing, Technische Universit\"at Dresden, Germany.
          \mbox{e-mail}: sebastian.franz@tu-dresden.de},\and
        Kleio Liotati\footnote{
          Technische Universit\"at Dresden, Germany.
          \mbox{e-mail}: klio9965@gmail.com}
       }
\date{\today}
\pgfplotsset{compat=1.9}
\tikzset{external/system call={pdflatex \tikzexternalcheckshellescape 
-interaction=batchmode -jobname "\image" "\texsource";
convert -density 600 -transparent white "\image.pdf" "\image.png"}}
\newcommand{\scp}[1]{\langle #1 \rangle}
\newcommand{\PS}{\mathbbm{P}}
\newcommand{\U}{\mathcal{U}}
\newcommand{\V}{\mathcal{U}_h}
\theoremstyle{plain}
\newtheorem{thm}{Theorem}[section]
\newtheorem{lem}[thm]{Lemma}
\newtheorem{rem}[thm]{Remark}
\begin{document}
  \pagestyle{fancy}
  \maketitle
  \begin{abstract}
    We consider a one-dimensional singularly perturbed 4th order problem with 
    the additional feature of a shift term. An expansion into a smooth term, boundary layers
    and an inner layer yields a formal solution decomposition, and together with a stability result 
    we have estimates for the subsequent numerical analysis. With classical layer adapted 
    meshes we present a numerical method, that achieves supercloseness and optimal convergence orders in 
    the associated energy norm. We also consider coarser meshes in view of the weak layers.
    Some numerical examples conclude the paper and support the theory.
  \end{abstract}

  \textit{AMS subject classification (2010):} 65L11, 65L60

  \textit{Key words:} singularly perturbed, 
     4th order problem,
     shift, 
     solution decomposition,
     mesh generation
     
  \section{Introduction}
  In this paper we consider for $m\in\{1,\,2\}$ the singularly perturbed 4th order problem
  \begin{subequations}\label{eq:problem}
  \begin{align}
    Lu(x):=\eps^2 u^{(4)}(x)-b(x)u''(x)+c(x)u(x)+d(x)u(x-1)&=f(x),\,x\in\Omega=(0,2),\\
    u(x)&=\Phi(x),\,x\in(-1,0),\\
    u(0)=u(2)&=0,\\
    u^{(m)}(0)=u^{(m)}(2)&=0,\label{eq:problem:BC}
  \end{align}
  \end{subequations}
  where $\Phi$ is a given function with $\Phi(0)=u(0)=0$ and $\Phi^{(m)}(0)=u^{(m)}(0)=0$, which is not a practical restriction,
  and $b,c,d$ are smooth functions with $b\geq \beta ^2>0$ and $c-\frac{\norm{d}{L^\infty(1,2)}}{2} -\frac{\norm{b'}{L^\infty}^2}{2\beta ^2}\geq \delta>0$.
  The case $m=1$ in \eqref{eq:problem:BC} can be seen as modelling a clamped 1d beam, while 
  $m=2$ models a supported 1d beam.

  For second-order singularly perturbed problems with a shift (sometimes also called a delay) we find some papers on numerical analysis in the literature, see e.g. \cite{KumarKadalbajoo, Gupta, Bansal, Chakravarthy, KumarKumari,BFrLR22a,NX13}
  for the reaction-diffusion case and \cite{SR12,SR13,RS15,BFrLR22b} for the convection-diffusion case.
  For the fourth order problem this is the first paper on problems with a shift. 
  
  We will follow the classical way of analysing numerical methods for singularly perturbed problems, see also \cite{RST08}.
  To do this, we first provide a solution decomposition of our problem. One way to do this is to make assumptions about the signs of the coefficients
  and derive a maximum principle. We proceed in another way, without restricting the coefficients, and use a stability result
  involving Green's function estimates, see Section~\ref{sec:decomposition} and the appendix. Once the structure of the solution is known, the 
  construction of layer-adapted meshes is straightforward, see Section~\ref{sec:method}. But we will also look at 
  another construction, a more problem-orientated one in Section~\ref{sec:coarse}. Section~\ref{sec:numana} contains the 
  numerical analysis on the standard layer-adapted meshes.
  Finally, there are numerical examples in Section~\ref{sec:numerics}, which provide some numbers to support the theoretical claims.

  Notation: We will denote by $L^p(D)$ the classical Lebesgue norm over the domain $D$ and skip the reference to the domain when $D=\Omega$.

\section{Solution decomposition}\label{sec:decomposition}
  We give a derivation of a formal solution decomposition for the solution $u$ of \eqref{eq:problem} in the case of constant
  coefficients $b$ and $c$, but assume that it also holds for the case of variable coefficients. In the following we set $m=1$, but the derivation
  can easily be adapted to the other case.
  
  As a first step, we rewrite our problem \eqref{eq:problem} by splitting $\Omega$ at $x=1$. 
  For $u=u_1\chi_{[0,1)}+u_2\chi_{[1,2]}$ we get
  \begin{subequations}\label{eq:4thsystem}
    \begin{align}
      \eps^2 u_1^{(4)}(x)-b(x)u_1''(x)+c(x)u_1(x)&=f(x)-d(x)\Phi(x-1),\,x\in(0,1),\label{eq:4thsystem:1}\\
      \eps^2 u_2^{(4)}(x)-b(x)u_2''(x)+c(x)u_2(x)&=f(x)-d(x)u_1(x-1),\,x\in(1,2),\label{eq:4thsystem:2}\\
      u_1(0)=u_2(2)&=0,\\
      u_1'(0)=u_2'(2)&=0,\\
      \jump{u}(1)    =
      \jump{u'}(1)&  =
      \jump{u''}(1)  =
      \jump{u'''}(1) =0,\label{eq:4thsystem:continuity}
    \end{align}
  \end{subequations}
  where $\jump{v}(1):=v_2(1^+)-v_1(1^-)$ denotes the jump of $v$ at $x=1$.
  In addition to the original conditions, we have the continuity conditions \eqref{eq:4thsystem:continuity}.
  Together with the differential equations \eqref{eq:4thsystem:1} and \eqref{eq:4thsystem:2} they
  give $u\in C^4(\Omega)$ if $f\in C(\Omega)$.
  
  To derive the formal decomposition, we need some auxiliary problems. For the so-called outer expansion,
  we replace $u$ by 
  \[
    \sum_{k=0}^\infty \eps^k S_k(x):=\sum_{k=0}^\infty \eps^k (S_{k,left}(x)\chi_{(0,1)}(x)+S_{k,right}(x)\chi_{(1,2)}(x))
  \]
  in \eqref{eq:4thsystem}. Comparing the lowest order power of $\eps$ we get for $S_0$ from the system 
  \begin{align*}
    -bS_{0,left}''(x) +cS_{0,left}(x) &=f(x)-d\Phi(x-1),\,x\in\Omega=(0,1),\\
    -bS_{0,right}''(x)+cS_{0,right}(x)&=f(x)-dS_{0,left}(x-1),\,x\in\Omega=(1,2),\\
    S_{0,left}(0)  =S_{0,right}(2)&=0,\\
    \jump{S_0}(1) =0,\,
    \jump{S_0'}(1)&=0.
  \end{align*}
  Its solution holds $S_0\in C^2(\Omega)\setminus C^3(\Omega)$ if $f\in C(\Omega)$
  and it does not satisfy the second set of boundary conditions.
  So we use inner expansions to correct the boundary values and the regularity problem. For the 
  left boundary we use a scaled variable $\xi=x/\eps$, replace $u$ in \eqref{eq:4thsystem} by 
  \[
    \sum_{k=1}^\infty\eps^k E_{k,left}(x):=\sum_{k=1}^\infty \eps^k \tilde E_{k,left}(\xi)
  \]
  and include the boundary condition correction. At the lowest level of $\eps$ this is
  \begin{align*}
    \tilde E_{1,left}^{(4)}(\xi)-b\tilde E_{1,left}''(\xi)&=0,\\
    \tilde E_{1,left}'(0)&=-S_0'(0).
  \end{align*}
  To get a unique solution, we assume that $\tilde E_{1,left}$ is exponentially decaying. 
  As a result, the boundary condition for the derivative is satisfied, but we introduce 
  a discrepancy in the first boundary condition:
  \[
    (S_0+\eps E_{1,left})(0)=\eps E_{1,left}(0)=\ord{\eps}.
  \]
  We will deal with this in the next step of our expansion.
  Using $\eta=(2-x)/\eps$ and $E_{k,right}(x)=\tilde E_{k,right}(\eta)$ we can apply the same idea to correct the boundary value
  at $x=2$.
  
  To resolve the regularity issue at $x=1$, we introduce two inner expansions. With the two scaled variables
  $\psi=(1-x)/\eps=\eta-1/\eps$ and $\theta=(x-1)/\eps=\xi-1/\eps$ we replace $u$ in \eqref{eq:4thsystem} by
  \begin{align*}
    \sum_{k=3}^\infty \eps^k W_k(x)
    =\sum_{k=3}^\infty \eps^k (\tilde W_{k,left}(\psi)\chi_{(0,1)}(x)+\tilde W_{k,right}(\theta)\chi_{(1,2)}(x)).
  \end{align*}
  The lowest order of $\eps$ now gives the following coupled problem
  \begin{align*}
    \tilde W_{3,left}^{(4)}(\psi)   -b\tilde W_{3,left}''(\psi)&=-d\tilde E_{1,right}(\psi),\\
    \tilde W_{3,right}^{(4)}(\theta)-b\tilde W_{3,right}''(\theta)&=-d\tilde E_{1,left}(\theta),\\
    \tilde W_{3,left}''(0)&=\tilde W_{3,right}''(0),\\
    -\tilde W_{3,left}'''(0)&=\tilde W_{3,right}'''(0)-\jump{S_0'''}(1),
  \end{align*} 
  where we have included the correction in the second continuity condition. We have also included 
  as right-hand sides the shifted terms resulting from the boundary corrections, which were not treated before.
  Note that $W_3$ and its first derivative are not continuous at $x=1$.
  Let 
  \[
    V_0:=S_0+\eps(E_{1,left}+E_{1,right})+\eps^3 W_3
  \]
  be the sum of the components derived so far. Then, we have
  \begin{align*}
    \jump{V_0''}(1)=\jump{V_0'''}(1)&=0,\,
    \jump{V_0}(1)=\eps^3\jump{W_3}(1)=\ord{\eps^3}, \,
    \jump{V_0'}(1)=\eps^3\jump{W_3'}(1)=\ord{\eps^2},\\
    V_0(0) &=\eps E_{1,left}(0)+\ord{\e^{-\frac{\beta }{\eps}}},\,
    V_0'(0) =\ord{\e^{-\frac{\beta }{\eps}}},\\
    V_0(2) &=\eps E_{1,right}(2)+\ord{\e^{-\frac{\beta }{\eps}}},\,
    V_0'(2) =\ord{\e^{-\frac{\beta }{\eps}}}.
  \end{align*}
  The remaining jumps and $\ord{\eps}$-violations of the boundary conditions can all be incorporated
  into the $S$-problems of the next steps in the expansion. In addition, let us look at the residual, the terms
  so far not included into our differential problems. They are 
  \[
    \eps^2 S_0^{(4)}+\eps c (E_{1,left}+E_{1,right})+\eps^3 c W_3+\eps^3 d W_{3,left}(\cdot -1).
  \]
  The first can be included as the right-hand side in the problem for $\eps^2 S_2$, the next two in the problems for $\eps^3 E_3$ 
  and the third one in the problem for $\eps^5 W_5$. Note that these are the components of $V_2$ with
  \[
    V_k:=\eps^kS_k+\eps^{k+1}(E_{k+1,left}+E_{k+1,right})+\eps^{k+3}W_{k+3}.
  \]
  The last term with the shift-operator can be included in the right-hand side of the problem for $\eps^5 E_{5,right}$.
  
  In general, we have the following sub-problems. For $k\geq 0$ the outer expansion solves
  \begin{subequations}\label{eq:outerEx}
  \begin{align}
      -bS_{k,left}''(x) +cS_{k,left}(x) &=\begin{cases}
                                            f(x)-d\Phi(x-1),&k=0,\\
                                            0,& k=1,\\
                                            -S_{k-2,left}^{(4)},&k\geq 2,
                                          \end{cases} & x\in(0,1),\\
      -bS_{k,right}''(x)+cS_{k,right}(x)&=-dS_{k,left}(x-1)+\begin{cases}
                                            f(x),&k=0,\\
                                            0,&k=1,\\
                                            -S_{k-2,right}^{(4)},&k\geq 2,
                                          \end{cases} & x\in(1,2),
    \end{align}\vspace{-1em}
    \begin{align}
      S_{k,left}(0)  &=\begin{cases}
                          0, & k=0,\\
                        -E_{k,left}(0),& k\geq 1,
                      \end{cases}\quad
      S_{k,right}(2)  =\begin{cases}
                          0, & k=0,\\
                        -E_{k,right}(2),& k\geq 1,
                      \end{cases}\\
      \jump{S_k}(1)&=\begin{cases}
                        0, & k\leq 2,\\
                        -\jump{W_k}(1), & k\geq 3,
                      \end{cases}\quad
      \jump{S_k'}(1) =\begin{cases}
                        0, & k\leq 1,\\
                        -\jump{W_{k+1}'}(1),&k\geq 2.
                      \end{cases}
  \end{align}
  \end{subequations}
  The inner expansion for the left boundary layer solves for $\xi\in[0,\infty)$ and $k\geq 1$ 
  \begin{subequations}\label{eq:innerExE1}
  \begin{align}
    \tilde E_{k,left}^{(4)}(\xi)-b\tilde E_{k,left}''(\xi)&=\begin{cases}
                                                              0,&k\leq 2,\\
                                                              -c\tilde E_{k-2,left}(\xi),&k\geq 3,
                                                            \end{cases}\\
    \tilde E_{k,left}'(0)&=-S_k'(0),\,\text{exp. decay},
  \end{align}
  \end{subequations}
  while for the right boundary layer we have for $\eta\in[0,\infty)$ and $k\geq 1$
  \begin{subequations}\label{eq:innerExE2}
  \begin{align}
    \tilde E_{k,right}^{(4)}(\eta)-b\tilde E_{k,right}''(\eta)&=\begin{cases}
                                                                  0,&k\leq 2,\\
                                                                  -c\tilde E_{k-2,right}(\eta),&k\in\{3,4\},\\
                                                                  -c\tilde E_{k-2,right}(\eta)-d\tilde W_{k-2,left}(\eta),&k\geq 5,
                                                                \end{cases}\\
    \tilde E_{k,right}'(0)&=-S_k'(2),\,\text{exp. decay}.
  \end{align}
  \end{subequations}
  Finally, the inner expansion of the inner layer solves for $\psi,\theta\in[0,\infty)$ and $k\geq 3$
  \begin{subequations}\label{eq:innerExW}
  \begin{align}
    \tilde W_{k,left}^{(4)}(\psi)   -b\tilde W_{k,left}''(\psi)&=-d\tilde E_{k-2,right}(\psi)+\begin{cases}
                                                                                                0,&k\leq 4,\\
                                                                                                -c\tilde W_{k-2,left}(\psi),&k\geq 5,
                                                                                              \end{cases}\\
    \tilde W_{k,right}^{(4)}(\theta)-b\tilde W_{k,right}''(\theta)&=-d\tilde E_{k-2,left}(\theta)+\begin{cases}
                                                                                                    0,&k\leq 4,\\
                                                                                                    -c\tilde W_{k-2,right}(\theta),&k\geq 5,
                                                                                                  \end{cases}\\
    \tilde W_{k,left}''(0)&=\tilde W_{k,right}''(0),\\
    -\tilde W_{k,left}'''(0)&=\tilde W_{k,right}'''(0)-\jump{S_{k-3}'''}(1),\,\text{exp. decay}.
  \end{align}
  \end{subequations}
  With these problems we can define $V_k$ uniquely for any $k\geq 0$ and by above considerations, we have
  the formal solution decomposition 
  \begin{align*}
    u(x)&=\sum_{k=0}^\infty V_k+\ord{\e^{-\frac{\beta }{\eps}}}\\
        &= \sum_{k=0}^\infty \eps^k S_k(x)
          +\sum_{k=1}^\infty \eps^k (E_{k,left}(x)+E_{k,right}(x))
          +\sum_{k=3}^\infty \eps^k W_k(x)+\ord{\e^{-\frac{\beta }{\eps}}}.
  \end{align*}
  In the case $m=2$ the inner expansions of $E_{left}$ and $E_{right}$ start with $k=2$, but otherwise the same derivation holds.
  \begin{lem}\label{lem:stability}
    Let us consider the system \eqref{eq:4thsystem:1} and \eqref{eq:4thsystem:2} together with the more general conditions
    \begin{subequations}\label{eq:problem:BC2}
    \begin{align}
      u_1(0)=\alpha_1,\,
      u_1(1)&=\alpha_2,\,
      u_2(1)=\alpha_2+\delta_1,\,
      u_2(2)=\alpha_3,\\
      u_1'(0)=\beta_1,\,
      u_1'(1)&=\beta_2,\,
      u_2'(1)=\beta_2+\delta_2,\,
      u_2'(2)=\beta_3,
    \end{align}
    \end{subequations}
    where $\alpha_1,\,\alpha_3,\,\beta_1,\,\beta_3,\,\delta_1,\,\delta_2$ are given parameters and
    $\alpha_2,\,\beta_2$ are chosen, such that 
    \begin{equation}
      \jump{u''}(1)=\jump{u'''}(1)=0.\label{eq:cont}
    \end{equation}
    Then it follows that
    \[
      \norm{u}{L^\infty(0,2)}\lesssim\norm{f}{L^\infty(0,2)}+|\alpha_1|+|\alpha_3|+|\delta_1|+|\delta_2|+|\beta_1|+|\beta_3|.
    \]
  \end{lem}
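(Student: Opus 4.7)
The proof looks like a classical three-step stability argument: (1) lift away the inhomogeneous boundary and interface data, (2) run an energy estimate on the homogenized problem, and (3) pass from $H^1$ to $L^\infty$ by the one-dimensional Sobolev embedding. The algebraic structure of the coefficient condition
\[
c-\tfrac{\norm{d}{L^\infty(1,2)}}{2}-\tfrac{\norm{b'}{L^\infty}^2}{2\beta^2}\geq\delta>0
\]
is essentially telling us in advance which two applications of Young's inequality will be used to absorb the $b'$-and the shift-term.

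\textbf{Step 1 (lifting).} I would construct a piecewise quintic $w=w_1\chi_{(0,1)}+w_2\chi_{(1,2)}$ by Hermite interpolation: $w_1$ prescribed by $w_1(0)=\alpha_1$, $w_1'(0)=\beta_1$, $w_1(1)=w_1'(1)=w_1''(1)=w_1'''(1)=0$, and $w_2$ prescribed analogously by its data at $x=2$ together with $w_2(1)=\delta_1$, $w_2'(1)=\delta_2$, $w_2''(1)=w_2'''(1)=0$. This yields $\norm{w^{(k)}}{L^\infty}\lesssim|\alpha_1|+|\alpha_3|+|\beta_1|+|\beta_3|+|\delta_1|+|\delta_2|$ for $k=0,\dots,4$. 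The function $v:=u-w$ then satisfies $v(0)=v'(0)=v(2)=v'(2)=0$ and $\jump{v^{(j)}}(1)=0$ for $j=0,1,2,3$; in particular $v\in C^3(\overline{\Omega})$, and it solves
\[
\eps^2 v^{(4)}-b v''+c v=g\quad\text{on }(0,1)\cup(1,2),
\]
with an effective right-hand side $g$ whose $L^\infty$-norm is controlled by $\norm{f}{L^\infty}+|\alpha_1|+|\alpha_3|+|\beta_1|+|\beta_3|+|\delta_1|+|\delta_2|$ (here the shift term on $(0,1)$ involving $\Phi$, and the data coming from $Lw$, are absorbed into $g$; the self-referential shift $d\,v(\cdot-1)$ on $(1,2)$ is kept on the left).

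\textbf{Step 2 (energy estimate).} I would test against $v$ and integrate on each subinterval. Integrating the fourth-order term by parts twice and the second-order term once, every boundary and interface contribution cancels, by the homogeneous boundary values of $v$ and by the vanishing of all four jumps at $x=1$, leaving
\[
\eps^2\norm{v''}{L^2}^2+\int_0^2 b\,(v')^2\dx+\int_0^2 b'\,v v'\dx+\int_0^2 c\,v^2\dx+\int_1^2 d\,v(x-1)v(x)\dx=\int_0^2 g\,v\dx.
\]
Two applications of Young's inequality,
\[
\left|\int_0^2 b' v v'\dx\right|\leq \tfrac{\beta^2}{2}\norm{v'}{L^2}^2+\tfrac{\norm{b'}{L^\infty}^2}{2\beta^2}\norm{v}{L^2}^2,\qquad
\left|\int_1^2 d\,v(\cdot-1)v\dx\right|\leq \tfrac{\norm{d}{L^\infty(1,2)}}{2}\norm{v}{L^2}^2,
\]
together with $b\geq\beta^2$ and the hypothesis on $c$, yield
\[
\eps^2\norm{v''}{L^2}^2+\tfrac{\beta^2}{2}\norm{v'}{L^2}^2+\delta\norm{v}{L^2}^2\lesssim \norm{g}{L^2}\norm{v}{L^2}.
\]
A final application of Young's inequality absorbs $\norm{v}{L^2}$ and produces $\norm{v}{H^1}\lesssim\norm{g}{L^2}$.

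\textbf{Step 3 ($L^\infty$-bound).} In one space dimension $H^1(\Omega)\hookrightarrow L^\infty(\Omega)$ with an $\eps$-independent constant, so $\norm{v}{L^\infty}\lesssim\norm{v}{H^1}\lesssim\norm{g}{L^2}\lesssim\norm{g}{L^\infty}$. Adding back $\norm{w}{L^\infty}$ completes the proof.

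\textbf{Main obstacle.} The only non-routine ingredient is the lifting in Step~1: one has to match six Dirichlet/Neumann values at the boundary and the two interface jumps $\delta_1,\delta_2$ while simultaneously enforcing $\jump{w''}(1)=\jump{w'''}(1)=0$, all with an $L^\infty$-bound independent of $\eps$. The quintic Hermite construction above does this in a decoupled way, which also makes the resulting bound on $\norm{Lw}{L^\infty}$ transparent. Apart from that, the proof is a textbook energy argument whose structure is dictated exactly by the coefficient hypothesis.
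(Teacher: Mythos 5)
Your proposal is correct, but it takes a genuinely different route from the paper's. The paper proves this lemma via a Green's function analysis: it constructs the Green's function $G$ of the constant-coefficient fourth-order operator on $(0,1)$ with homogeneous boundary data, lifts the given data by \emph{cubic} Hermite polynomials $\phi_i$ that interpolate only function and first-derivative values at each endpoint (so $\alpha_2,\beta_2$ stay as unknowns), and turns the two interface conditions \eqref{eq:cont} into a $2\times2$ linear system for $(\alpha_2,\beta_2)$. The bulk of the paper's proof is then spent showing that the coefficient matrix $\mA$ of this system is regular with an $\ord{1}$ inverse, which is done by computing (with MAPLE) the leading-order asymptotics of a long list of Green's-function integrals; as written this argument is tied to constant $b,c$. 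You instead lift with a piecewise \emph{quintic} chosen to annihilate $w_1'',w_1''',w_2'',w_2'''$ at $x=1$ as well, which makes $v=u-w$ lie in $C^3$ with fully homogeneous boundary and interface data; the energy identity you then derive -- with the two Young inequalities matched exactly to the hypothesis on $c$ -- is precisely the coercivity computation the paper itself performs in \eqref{eq:coercivity}, and $H^1(\Omega)\hookrightarrow L^\infty(\Omega)$ finishes. Your route is shorter, avoids the symbolic asymptotics and the $\ord{1}$-invertibility step entirely, and works unchanged for variable coefficients. What it does not supply, and the paper's construction does, is an explicit formula for $(\alpha_2,\beta_2)$ and hence a direct existence statement; but since the lemma is phrased as an a priori bound (with $\alpha_2,\beta_2$ ``chosen such that'' \eqref{eq:cont} holds) and your coercivity estimate also yields uniqueness of any such $u$, your argument is sufficient for the statement as given.
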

  \begin{proof}
    The proof is rather technical and is deferred to the appendix.
  \end{proof}
  
  \begin{thm}\label{thm:soldec}
    The solution $u$ of problem~\eqref{eq:problem} can be written for $x\in\Omega$ as
    \[
      u(x)=S(x)+E_1(x)+E_2(x)+W_1(x)+W_2(x),
    \]
    where we have for $0\leq k\leq q+1$
    \begin{align*}
      \norm{S^{(k)}}{L^2(0,1)}+\norm{S^{(k)}}{L^2(1,2)} & \lesssim 1,\\
      |E_1^{(k)}(x)|&\lesssim \eps^{m-k}\e^{-\frac{\beta x}{\eps}},&
      |E_2^{(k)}(x)|&\lesssim \eps^{m-k}\e^{-\frac{\beta (2-x)}{\eps}},\\
      |W_1^{(k)}(x)\chi_{(0,1)}(x)|&\lesssim \eps^{3-k}\e^{-\frac{\beta (1-x)}{\eps}},&
      |W_2^{(k)}(x)\chi_{(1,2)}(x)|&\lesssim \eps^{3-k}\e^{-\frac{\beta (x-1)}{\eps}}.
    \end{align*}
  \end{thm}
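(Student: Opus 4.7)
The plan is to make the formal expansion of the preceding pages rigorous by truncating each series at a suitable order. Fix an integer $N$, to be chosen larger than $q+1$, and set
\[
  S := \sum_{k=0}^{N} \eps^k S_k, \qquad E_j := \sum_{k=m}^{N} \eps^k E_{k,j}\quad (j\in\{left,right\}),
\]
together with
\[
  W_1(x) := \sum_{k=3}^{N} \eps^k \tilde W_{k,left}\bigl((1-x)/\eps\bigr)\chi_{(0,1)}(x),\qquad
  W_2(x) := \sum_{k=3}^{N} \eps^k \tilde W_{k,right}\bigl((x-1)/\eps\bigr)\chi_{(1,2)}(x),
\]
where the individual summands are the unique solutions of the sub-problems \eqref{eq:outerEx}–\eqref{eq:innerExW}. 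I would then show (i) that these partial sums satisfy the bounds stated in the theorem, and (ii) that the remainder $r := u - S - E_1 - E_2 - W_1 - W_2$ is exponentially small in $\eps$ and can be absorbed into $S$ without harming its bound.

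For (i), each $S_k$ solves a regular second-order reaction–diffusion problem on $(0,1)$ and $(1,2)$ with smooth right-hand sides inherited from $f$, $\Phi$ and the previously constructed $S_{k-2}$. Standard elliptic regularity on each sub-interval together with coercivity ($c\geq\delta>0$) yields $\norm{S_k^{(j)}}{L^2(0,1)}+\norm{S_k^{(j)}}{L^2(1,2)}\lesssim 1$ for all $j\leq q+1$, uniformly in $\eps$. For the layer terms, the characteristic polynomial $\lambda^2(\lambda^2-b)$ of the homogeneous equation $\tilde E^{(4)}-b\tilde E''=0$ has decaying root $\lambda=-\sqrt{b}\leq -\beta$; together with the decay assumption this forces $|\tilde E_{k,*}^{(j)}(\xi)|\lesssim \e^{-\beta\xi}$ for every $j$, and a variation-of-parameters argument preserves the rate when the inhomogeneity decays at the same rate. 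Undoing $\xi=x/\eps$ gives a factor $\eps^{-j}$ per derivative, so $|\eps^k E_{k,*}^{(j)}(x)|\lesssim\eps^{k-j}\e^{-\beta x/\eps}$; summing over $k\geq m$ produces the announced $\eps^{m-j}$ bound. The inner-layer $\tilde W_{k,*}$ are treated analogously on the half-line, with the junction conditions at $\psi=\theta=0$ only shifting integration constants, so the same exponential rate $\beta$ survives.

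For (ii), by construction $V := S+E_1+E_2+W_1+W_2$ inserted into \eqref{eq:4thsystem} leaves a residual consisting only of terms with prefactor $\eps^{k}$, $k>N$ (those terms that were deferred to the next step of the expansion). Hence $Lr = -R$ with $\norm{R}{L^\infty}\lesssim \eps^{N+1}$. The boundary defects $r(0), r'(0), r(2), r'(2)$ and the jumps $\jump{r}(1),\jump{r'}(1)$ fit exactly the data format $\alpha_1,\alpha_3,\beta_1,\beta_3,\delta_1,\delta_2$ of \eqref{eq:problem:BC2}: the boundary contributions amount to $\ord{\e^{-\beta/\eps}}$ coming from the tails of exponentially decaying layers evaluated at the opposite endpoint, while the jumps are $\ord{\eps^{N+1}}$. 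Lemma~\ref{lem:stability} then delivers $\norm{r}{L^\infty}\lesssim \eps^{N+1}+\e^{-\beta/\eps}$; transferring this to derivatives of $r$ is done by solving the ODE $\eps^2 r^{(4)} = R+br''-cr-dr(\cdot-1)$ and iterating, at the cost of a few powers of $\eps$, which is why $N$ must be taken larger than $q+1$. Redefining $S$ to include $r$ (which is smooth and exponentially small) leaves its bound unchanged.

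The main obstacle will be the bookkeeping between the auxiliary problems: one must check at every truncation order that the boundary and jump discrepancies introduced by the cascade really fall into the classes allowed by Lemma~\ref{lem:stability}. In particular the inner-layer functions $W_k$ do not vanish at $x=0,2$ but are exponentially small there, and these tails have to be routed cleanly into $\alpha_1,\alpha_3,\beta_1,\beta_3$; similarly the $E_k$ contribute $\ord{\e^{-\beta/\eps}}$ to the jumps at $x=1$. Verifying that these corrections are correctly cancelled at each higher step (rather than accumulating) is the delicate part, and it requires a careful induction on $k$ rather than a direct one-shot estimate.
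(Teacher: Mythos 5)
Your proposal takes essentially the same route as the paper: truncate the formal expansion at a large enough order, quantify the residual in the equation and the boundary/jump data, invoke Lemma~\ref{lem:stability} to bound the remainder, and fold the remainder into $S$. You supply more detail than the paper on two points it leaves implicit -- the elliptic regularity/decay arguments for the sub-problems and, more importantly, the bootstrapping of the $L^\infty$ remainder bound to the $L^2$ bounds on its derivatives (which is why $\ell$, respectively your $N$, has to exceed $q+1$) -- but the underlying argument is identical.
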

  \begin{proof}
    Using the decomposition
    \[
      u(x)=\sum_{k=0}^\ell V_\ell(x)+R(x)
    \]
    with the remainder $R$, we can consider the problem for $u-R$. Here we have
    \begin{align*}
      |(u-R)(0)|+|(u-R)(2)|  &\lesssim \eps^{\ell+1}\\
      |(u-R)'(0)|+|(u-R)'(2)|&\lesssim \e^{-\sqrt{b}/\eps}\\
      |\jump{(u-R)''}(1)|    &\lesssim \eps^{\ell+3}\\
      |\jump{(u-R)'''}(1)|   &\lesssim \eps^{\ell+2}\\
      \norm{L(u-R)}{L^\infty(0,2)} &\lesssim \eps^{\ell+1}.
    \end{align*}
    Lemma~\ref{lem:stability} gives $\norm{u-R}{L^\infty(0,2)}\lesssim \eps^{\ell+1}$
    and the remainder can be incorporated into $S$ together with the contributions $S_0$ to $S_\ell$. 
    Similarly we combine the different layer components into $E_1$, $E_2$, $W_1$ and $W_2$ and the proof 
    is done by choosing $\ell$ large enough.
  \end{proof}

  We will also denote the boundary layers by $E=E_1+E_2$ and the inner layer by $W=W_1\chi_{(0,1)}+W_2\chi_{(1,2)}$.
  A visual confirmation is given for two examples in Figures~\ref{fig:plot} and \ref{fig:plot2} in Section~\ref{sec:numerics}.

\section{Numerical method}\label{sec:method}

  To derive our numerical method, we introduce the auxiliary variable 
  \[
    w=\eps u''.
  \]
  Note that by Theorem~\ref{thm:soldec} we immediately have a decomposition for $w$:
  \[
    w=\tilde S+\tilde E_1+\tilde E_2+\tilde W_1+\tilde W_2,
  \]
  where for $0\leq k\leq q+1$ we have 
  \begin{align*}
    |\tilde S^{(k)}(x)|&\lesssim\eps,\\
    |\tilde E_1^{(k)}(x)|&\lesssim \eps^{m-1-k}\e^{-\frac{\beta x}{\eps}},&
    |\tilde E_2^{(k)}(x)|&\lesssim \eps^{m-1-k}\e^{-\frac{\beta (2-x)}{\eps}},\\
    |\tilde W_1^{(k)}(x)\chi_{(0,1)}(x)|&\lesssim \eps^{2-k}\e^{-\frac{\beta (1-x)}{\eps}},&
    |\tilde W_2^{(k)}(x)\chi_{(1,2)}(x)|&\lesssim \eps^{2-k}\e^{-\frac{\beta (x-1)}{\eps}}.
  \end{align*}
  Similar to the previous notation we set 
  $\tilde E=\tilde E_1+\tilde E_2$ and $\tilde W=\tilde W_1\chi_{(0,1)}+\tilde W_2\chi_{(1,2)}$.
  
  Now we rewrite our problem~\eqref{eq:problem} into its variational formulation. Let
  \[
    \U:=\begin{cases}
         H_0^1(\Omega)\times H^1(\Omega),& m=1,\\
         H_0^1(\Omega)\times H_0^1(\Omega), &m=2.
       \end{cases}
  \]
  Then the problem reads: Find $(u,w)\in \U$ such that 
  for all $(y,z)\in\U$ it holds
  \begin{align}
    B((u,w),(y,z))
      &:=-\eps\scp{w',y'}+\scp{bu',y'}+\scp{b'u',y}+\scp{cu,y}+\scp{du(\cdot-1),y}_{(1,2)}\notag\\
         &\hspace*{2cm}+\eps\scp{u',z'}+\scp{w,z}\notag\\
      &= \scp{f,y}-\scp{d\Phi(\cdot-1),y}_{(0,1)}=:F(y).\label{eq:varprob}
  \end{align}
  Note that for $m=1$ the normal-boundary condition is only weakly enforced.
  
  For the bilinear form defined in \eqref{eq:varprob} we have coercivity
  \begin{align}
    B((u,w),(u,w))
      &:=-\eps\scp{w',u'}+\scp{bu',u'}+\scp{b'u',u}+\scp{cu,u}+\scp{du(\cdot-1),u}_{(1,2)}\notag\\
           &\hspace*{2cm}+\eps\scp{u',w'}+\scp{w,w}\notag\\
      &=\scp{bu',u'}+\scp{b'u',u}+\scp{cu,u}+\scp{du(\cdot-1),u}_{(1,2)}+\scp{w,w}\notag\\
      &\geq \norm{w}{L^2}^2+\frac{\beta ^2}{2}\norm{u'}{L^2}^2+\left( c-\frac{\norm{d}{L^\infty(1,2)}}{2} -\frac{\norm{b'}{L^\infty}^2}{2\beta ^2}\right)\norm{u}{L^2}^2\notag\\
      &\geq \norm{w}{L^2}^2+\frac{\beta ^2}{2}\norm{u'}{L^2}^2+\delta\norm{u}{L^2}^2=:\tnorm{(u,w)}^2.\label{eq:coercivity}
  \end{align}
  \begin{rem}\label{rem:weak}
    Note that this norm is weak, i.e. contributions of the layers vanish for $\eps\to 0$:
    \[
      \tnorm{(S,\tilde S)}\lesssim 1,
      \quad
      \tnorm{(E,\tilde E)}\lesssim\eps^{m-1/2}\to 0,
      \quad
      \tnorm{(W,\tilde W)}\lesssim\eps^{5/2}\to 0.
    \]
    Using weighted norms as in \cite{MS21} only reduces the powers of $\eps$, but the norm remains weak.
  \end{rem}
  
  Our numerical method will be given on a layer-adapted mesh. We will start with an S-type mesh, see~\cite{RL99}, 
  in Section~\ref{sec:numana} and consider variations of it in Section~\ref{sec:coarse}. 
  Let the number of mesh cells $N\in\N$ be divisible by 8 and 
  \[
    \lambda:=\min\left\{\frac{\sigma}{\beta }\eps\ln(N),\frac{1}{4}\right\}
  \]
  be the transition point, where we assume that $\eps$ is small enough such that $\lambda<\frac{1}{4}$.
  The parameter $\sigma>0$ is defined in the following numerical analysis. 
  The nodes of the mesh $\Omega_h$ are then given by
  \begin{gather}\label{eq:Smesh}
    x_i=\begin{cases}
           \frac{\sigma\eps}{\beta }\phi\left(\frac{4i}{N}\right),&       0\leq i \leq \frac{N}{8},\\
           \frac{4i}{N}(1-2\lambda)+2\lambda-\frac{1}{2},& \frac{N}{8}\leq i\leq \frac{3N}{8},\\
           1-\frac{\sigma\eps}{\beta }\phi\left(2-\frac{4i}{N}\right),&   \frac{3N}{8}\leq i \leq \frac{N}{2},\\
           1+x_{i-N/2},& \frac{N}{2}\leq i\leq N.
        \end{cases}
  \end{gather}
  
  We denote the so-called mesh-defining function by $\phi$, which is monotonically increasing with $\phi(0)=0$ and $\phi(1/2)=\ln(N)$,
  see \cite{RL99} for the exact conditions on $\phi$.
  In the numerical analysis we will also use the associated mesh characterising function $\psi=\e^\phi$, or more precisely
  the quantity $\max|\psi'|:=\max\limits_{t\in[0,1/2]}|\psi'(t)|$. Two of the most common S-type meshes are
  the Shishkin mesh with
  \[
    \phi(t) = 2t\ln N,\quad
    \psi(t) = N^{-2t},\quad
    \max|\psi'| \leq 2\ln N,\quad
    h= \frac{8\sigma\eps}{\beta } N^{-1}\ln N
  \]
  and the Bakhvalov S-mesh with
  \[
    \phi(t)=-\ln(1-2t(1-N^{-1})),\quad
    \psi(t)=1-2t(1-N^{-1}),\quad
    \max|\psi'|\leq 2,\quad 
    h\leq \frac{\sigma\eps}{\beta }\ln 5.
  \]
  For the mesh widths within the layers it holds, see \cite{RL99},
  \[
    h_i\lesssim \eps N^{-1}\max|\psi'|\e^{\frac{\beta  x}{\sigma\eps}},\,x\in[x_{i-1},x_i].
  \]
  
  The mesh $\Omega_h$ is then given by
  \[
    \Omega_h:=\{[x_{i-1},x_i],\,i\in\{1,\dots,N\}\}.
  \]
  Note that the maximum mesh width in the layer regions is $h\lesssim\eps$ and outside $H\lesssim N^{-1}$.

  Finally, we define our discrete space on the given mesh by
  \[
    \V:=\{(u_h,w_h)\in\U:u_h|_T\in\PS_q(T),\,w_h|_T\in\PS_q(T),\,\forall T\in\Omega_h\},
  \]
  where $\PS_q(T)$ denotes the space of polynomials of maximum degree $q$ over $T$.
  Our discrete method is given by: Find $(u_h,w_h)\in\V$ such that for all $(y,z)\in\V$ it holds
  \begin{gather}\label{eq:discr}
    B((u_h,w_h),(y,z))=F(y).
  \end{gather}
  
\section{Numerical analysis} \label{sec:numana}
  Note that by $\V\subset\U$, \eqref{eq:varprob} and \eqref{eq:discr} we have Galerkin orthogonality
  \begin{gather}\label{eq:Galerkin_orthogonality}
    B((u-u_h,w-w_h),(y,z))=0,\text{ for all }(y,z)\in\V.
  \end{gather}
  To analyse the error of the numerical method, we define a compound interpolation operator
  $I=(I_1,I_2):C(\Omega)^2\to\V$ locally for each $i\in\{1,\dots,N\}$ and $v\in C([x_{i-1},x_i])$ by
  \begin{align*}
    (I_jv-v)(x_{i-1})=0,\quad
    (I_jv-v)(x_{i})&=0,\\
    \int_{x_{i-1}}^{x_i}(I_jv-v)(x)\xi(x)\dx&=0,\,\forall \xi\in\PS_{q-2}([x_{i-1},x_i]),\,j\in\{1,2\}.\\
  \end{align*}
  Here $I_1$ and $I_2$ are defined similarly and for $m=2$ they are the same, but for $m=1$ the 
  spaces the operator maps into are different. Let us decompose the errors as follows
  \begin{align*}
    u-u_h=(u-I_1u)-(u_h-I_1u)=:\eta_u+\xi_u,\\
    w-w_h=(w-I_2w)-(w_h-I_2w)=:\eta_w+\xi_w,
  \end{align*}
  where $(\eta_u,\eta_w)$ is the interpolation error and $(\xi_u,\xi_w)\in\V$ is the discrete error. Then 
  with \eqref{eq:Galerkin_orthogonality} and the coercivity~\eqref{eq:coercivity} we obtain the error inequality
  \[
    \tnorm{(\xi_u,\xi_w)}^2
     \leq B((\xi_u,\xi_w),(\xi_u,\xi_w))
     = B((\eta_u,\eta_w),(\xi_u,\xi_w)).
  \]
  \begin{lem}\label{lem:error_estimation}
    It holds
    \[
      B((\eta_u,\eta_w),(\xi_u,\xi_w))
        \lesssim(\norm{\eta_u}{L^2}+\norm{\eta_w}{L^2}+\norm{b-\bar b}{L^\infty}\norm{\eta_u'}{L^2})\tnorm{(\xi_u,\xi_w)},
    \]
    where $\bar b$ is a cell-wise average of $b$.
  \end{lem}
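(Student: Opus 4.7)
The plan is to expand $B((\eta_u,\eta_w),(\xi_u,\xi_w))$ into the seven summands coming from \eqref{eq:varprob} and to treat them in two groups. The three terms $\scp{c\eta_u,\xi_u}$, $\scp{d\eta_u(\cdot-1),\xi_u}_{(1,2)}$ and $\scp{\eta_w,\xi_w}$ admit an immediate Cauchy--Schwarz bound by $\|\eta_u\|_{L^2}$ or $\|\eta_w\|_{L^2}$ times $\tnorm{(\xi_u,\xi_w)}$, using the uniform boundedness of $b,c,d$ and the shift invariance $\|\eta_u(\cdot-1)\|_{L^2(1,2)}=\|\eta_u\|_{L^2(0,1)}$. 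The four remaining summands, $-\eps\scp{\eta_w',\xi_u'}$, $\eps\scp{\eta_u',\xi_w'}$, $\scp{b\eta_u',\xi_u'}$ and $\scp{b'\eta_u',\xi_u}$, carry either a derivative of a discrete error that $\tnorm{\cdot}$ does not control (namely $\xi_u''$ or $\xi_w'$) or an uncompensated $\|\eta_u'\|_{L^2}$ factor, so they need extra work.

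The key mechanism for all four is cellwise integration by parts combined with the two defining properties of the interpolant: both $\eta_u$ and $\eta_w$ are continuous, vanish at every mesh node $x_i$, and are $L^2$-orthogonal on each cell $T$ to $\PS_{q-2}(T)$ (non-trivial for $q\geq 2$). For each $\eps$-scaled cross term we integrate by parts on every cell, moving the derivative onto the $\xi$-factor. This produces an interior integral of the form $\int_T\eta_w\xi_u''\dx$ or $\int_T\eta_u\xi_w''\dx$, which vanishes because $\xi_u''|_T,\,\xi_w''|_T\in\PS_{q-2}(T)$ and by the moment orthogonality. The resulting boundary contributions telescope, via continuity of $\eta_u$ and $\eta_w$, into sums over mesh nodes weighted by $\eta_u(x_i)$ or $\eta_w(x_i)$, all of which vanish. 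Thus both $\eps$-terms contribute exactly zero.

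The term $\scp{b\eta_u',\xi_u'}$ is split as $\scp{\bar b\,\eta_u',\xi_u'}+\scp{(b-\bar b)\eta_u',\xi_u'}$ with $\bar b$ the cellwise mean of $b$. On each cell $\bar b$ is constant, so the same IBP plus moment-plus-nodal argument kills the first piece identically, while the second is bounded directly by $\|b-\bar b\|_{L^\infty}\|\eta_u'\|_{L^2}\|\xi_u'\|_{L^2}\lesssim\|b-\bar b\|_{L^\infty}\|\eta_u'\|_{L^2}\tnorm{(\xi_u,\xi_w)}$. For $\scp{b'\eta_u',\xi_u}$, a direct bound would yield the unwanted factor $\|\eta_u'\|_{L^2}$ without any $\|b-\bar b\|_{L^\infty}$ prefactor, so instead we integrate by parts once more to shift the derivative off $\eta_u$, obtaining $-\scp{b''\eta_u,\xi_u}-\scp{b'\eta_u,\xi_u'}$ plus boundary terms that again vanish at nodes because $\eta_u(x_i)=0$; both surviving pieces have the required form $\|\eta_u\|_{L^2}\tnorm{(\xi_u,\xi_w)}$.

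The one real obstacle is spotting this cancellation pattern: without the simultaneous use of the nodal interpolation conditions and the $\PS_{q-2}$ moment orthogonality, the derivative terms in $\xi_u''$ and $\xi_w'$ would persist on the right-hand side and the inequality could not be closed in the (already weak) energy norm. Once these cancellations are in place, collecting all seven contributions via Cauchy--Schwarz immediately yields the stated estimate.
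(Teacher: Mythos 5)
Your proposal is correct and follows essentially the same route as the paper's proof: expand $B$ into the seven summands, kill the two $\eps$-weighted cross terms by cellwise integration by parts combined with the nodal interpolation conditions and the $L^2$-orthogonality of $\eta_u,\eta_w$ to $\PS_{q-2}$ on each cell, split $\scp{b\eta_u',\xi_u'}$ via the cellwise mean $\bar b$, integrate $\scp{b'\eta_u',\xi_u}$ by parts once more, and finish the remaining three terms with Cauchy--Schwarz. The only cosmetic difference is your ``telescoping'' language; in fact each cell's boundary contribution already vanishes on its own because $\eta_u$ and $\eta_w$ are zero at every mesh node, so no telescoping is needed.
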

  \begin{proof}
    Let us look at the individual terms of 
    \begin{align*}
      B((\eta_u,\eta_w),(\xi_u,\xi_w))
        &=-\eps\scp{\eta_w',\xi_u'}+\eps\scp{\eta_u',\xi_w'}+\scp{b\eta_u',\xi_u'}+\scp{b'\eta_u',\xi_u}+\scp{c\eta_u,\xi_u}+\scp{\eta_w,\xi_w}\notag\\
           &\hspace*{2cm}+\scp{d\eta_u(\cdot-1),\xi_u}_{(1,2)}\\
           &=:\sum_{k=1}^7 B_k
    \end{align*}
    on each cell $T_i:=[x_{i-1},x_i]$.
    By definition of the interpolation operator $I$ and integration by parts we obtain
    \begin{align*}
      \scp{\eta_w',\xi_u'}_{T_i}
        &=\eta_w(x)\xi_u'(x)|_{x_{i-1}}^{x_i}-\int_{x_{i-1}}^{x_i}\eta_w(x)\xi_u''(x)\dx=0,\\
      \scp{\eta_u',\xi_w'}_{T_i}
        &=0
    \end{align*}
    and therefore, $B_1$ and $B_2$ are zero. For $B_3$ we introduce the cell-wise average
    \[
      \bar b|_{T_i}=\int_{T_i}b(x)\dx.
    \]
    Then it holds again with the interpolation property
    \begin{align*}
      \scp{b\eta_u',\xi_u'}
        &=\scp{(b-\bar b)\eta_u',\xi_u'}+\bar b\scp{\eta_u',\xi_u'}
         \lesssim\norm{b-\bar b}{L^\infty(T_i)}\norm{\eta_u'}{L^2(T_i)}\norm{\xi_u'}{L^2(T_i)}.
    \end{align*}
    For $B_4$  we also apply integration by parts and have
    \begin{align*}
      \scp{b'\eta_u',\xi_u}_{T_i}
        &=\eta_u(x)b'(x)\xi_u(x)|_{x_{i-1}}^{x_i}-\scp{b''\eta_u,\xi_u}_{T_i}-\scp{b'\eta_u,\xi_u'}_{T_i}\\
        &\lesssim\norm{\eta_u}{L^2(T_i)}(\norm{\xi_u}{L^2(T_i)}+\norm{\xi_u'}{L^2(T_i)}).
    \end{align*}
    The last three terms $B_5 $ to $B_7$ are simply estimated by a Cauchy-Schwarz inequality
    \begin{align*}
      \scp{c\eta_u,\xi_u}&\lesssim \norm{\eta_u}{L^2}\norm{\xi_u}{L^2},\\
      \scp{\eta_w,\xi_w} &\lesssim \norm{\eta_w}{L^2}\norm{\xi_w}{L^2},\\
      \scp{d\eta_u(\cdot-1),\xi_u}_{(1,2)}&\lesssim\norm{\eta_u}{L^2(0,1)}\norm{\xi_u}{L^2(1,2)}.
    \end{align*}
    Combining the local and global estimates completes the proof.
  \end{proof}
  The main ingredient of interpolation error estimates are local estimates. Here we have on a cell $\tau_i$ of width $h_i$
  \begin{gather}\label{eq:inter:local}
    \norm{(v-Iv)^{(\ell)}}{L^2(\tau_i)}
      \lesssim \norm{h_i^{s-\ell}v^{(s)}}{L^2(\tau_i)}
  \end{gather}
  for $0\leq\ell< s\leq q+1$. 
  \begin{lem}\label{lem:inter:Stype}
    It holds for $u,w\in H^{q+1}(\Omega)$ and $\sigma\geq q+1$ using the results of Section~\ref{sec:decomposition}
    \begin{align*}
      \norm{\eta_u}{L^2}  &\lesssim (h+N^{-1})^{q+1}     +\eps^{m}(N^{-1}\max|\psi'|)^{q+1},\\
      \norm{\eta_w}{L^2}  &\lesssim \eps (h+N^{-1})^{q+1}+\eps^{m-1}(N^{-1}\max|\psi'|)^{q+1},\\
      \norm{\eta_u'}{L^2} &\lesssim (h+N^{-1})^q         +\eps^{m-1/2}(N^{-1}\max|\psi'|)^q.
    \end{align*}    
  \end{lem}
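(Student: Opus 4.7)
The plan is to apply the local interpolation bound \eqref{eq:inter:local} cell-by-cell to the decomposition from Theorem~\ref{thm:soldec} (and its analogue for $w$), summing the contributions of the smooth part and of each layer component over the two mesh regions (the fine layer region and the coarse outer region). The three estimates all follow the same pattern, with the exponents adjusted according to whether we estimate $\eta_u$, $\eta_w$ (one derivative less of $u$) or $\eta_u'$ (local order one less). I will spell out the argument for $\eta_u$; the other two are structurally identical.

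First I would split $u = S + E_1 + E_2 + W_1 + W_2$ and bound each piece separately. For the smooth part, Theorem~\ref{thm:soldec} gives $\|S^{(q+1)}\|_{L^2}\lesssim 1$, and since $h_i\le \max\{h,H\}\lesssim h+N^{-1}$ on every cell, summing the local bound $\|S-I_1 S\|_{L^2(\tau_i)}\lesssim h_i^{q+1}\|S^{(q+1)}\|_{L^2(\tau_i)}$ yields the term $(h+N^{-1})^{q+1}$. For the boundary layer $E_1$, I split the domain at the transition point $\lambda$. In the fine region $[0,\lambda]$ I use $h_i\lesssim \eps N^{-1}\max|\psi'|\,\e^{\beta x/(\sigma\eps)}$ together with the derivative bound $|E_1^{(q+1)}(x)|\lesssim \eps^{m-q-1}\e^{-\beta x/\eps}$, so that
\begin{equation*}
  h_i^{q+1}|E_1^{(q+1)}(x)|\lesssim \eps^{m}(N^{-1}\max|\psi'|)^{q+1}\,\e^{-(1-(q+1)/\sigma)\beta x/\eps}.
\end{equation*}
The choice $\sigma\ge q+1$ makes the exponential factor bounded (and integrable in $L^2$, losing only a harmless $\eps^{1/2}$ that is absorbed by enlarging the exponent on $\eps^{m}$ if needed), giving the desired $\eps^{m}(N^{-1}\max|\psi'|)^{q+1}$ contribution. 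In the coarse region $[\lambda,1]$ the layer itself is already exponentially small, $|E_1(x)|+|I_1E_1(x)|\lesssim \e^{-\beta\lambda/\eps}\le N^{-\sigma}\le N^{-(q+1)}$, which is absorbed into $(h+N^{-1})^{q+1}$. The layers $E_2$, $W_1$, $W_2$ are treated symmetrically using the exponential decay away from $x=2$ and $x=1$, respectively.

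For $\eta_w$ I repeat the argument with the decomposition of $w$: the smooth part now carries an extra factor $\eps$ (since $|\tilde S^{(k)}|\lesssim\eps$), producing $\eps(h+N^{-1})^{q+1}$, while the layer bound $|\tilde E_1^{(k)}|\lesssim\eps^{m-1-k}$ shifts the power of $\eps$ down by one, yielding $\eps^{m-1}(N^{-1}\max|\psi'|)^{q+1}$. For $\eta_u'$ I invoke \eqref{eq:inter:local} with $\ell=1$, $s=q+1$, so each cell contributes $h_i^q\|u^{(q+1)}\|_{L^2(\tau_i)}$; the smooth part gives $(h+N^{-1})^q$ and the layer computation in the fine region produces $(\eps N^{-1}\max|\psi'|)^q\eps^{m-q-1}\e^{-(1-q/\sigma)\beta x/\eps}$, whose $L^2$-norm contributes an additional $\eps^{1/2}$ (again using $\sigma\ge q+1>q$ for the exponential integral to be bounded), giving the announced $\eps^{m-1/2}(N^{-1}\max|\psi'|)^q$.

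The only delicate point is the balance between the exponential blow-up of $h_i^{q+1}$ in the layer region and the exponential decay of the layer functions; everything else is standard bookkeeping. The condition $\sigma\ge q+1$ is precisely what makes the exponent $-(1-(q+1)/\sigma)\beta x/\eps$ non-positive for the $L^\infty$/$L^2$-norm estimates and ensures that the coarse-region contribution $N^{-\sigma}$ is dominated by the main $N^{-(q+1)}$-term. No further ingredients are needed: a triangle inequality over the five components and the two mesh regions, combined with the local interpolation estimate \eqref{eq:inter:local} and the pointwise decay bounds from Theorem~\ref{thm:soldec}, closes the proof.
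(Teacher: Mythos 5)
Your proposal follows essentially the same argument as the paper: decompose via Theorem~\ref{thm:soldec}, apply \eqref{eq:inter:local} cell-wise, split the domain at the transition point, use the S-type mesh bound $h_i\lesssim\eps N^{-1}\max|\psi'|\e^{\beta x/(\sigma\eps)}$ in the layer region, and exploit $\sigma\geq q+1$ to control the exponential balance, with $L^\infty$-stability of $I$ in the coarse region.

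Two small points, neither fatal. First, for $\norm{\eta_u'}{L^2}$ in the coarse region $[\lambda,1-\lambda]$ you would need more than ``the layer is exponentially small'': the interpolant itself is a polynomial whose \emph{derivative} must be controlled, which the paper does via an inverse estimate, yielding the term $N\norm{I_1E_1}{L^2(\lambda,1-\lambda)}\lesssim N^{1-\sigma}\eps^{m}$; this needs to be mentioned, not subsumed under ``bookkeeping.'' Second, your remark about ``losing a harmless $\eps^{1/2}$'' is backwards: for $\sigma=q+1$ the exponential in the fine-region $L^2$-integral is constant, the integral is bounded by the (finite) interval length and nothing is lost, while for $\sigma>q+1$ one actually \emph{gains} an $\eps^{1/2}$ (which the paper notes explicitly). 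This does not affect the result but should be stated correctly.
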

  \begin{proof}
    Interpolation error estimation of $I$ on an S-type mesh is standard, see~\cite{RL99}. Nevertheless, 
    we present it here again to highlight the changes we make in Section~\ref{sec:coarse} on
    coarsened S-type meshes.
    
    With \eqref{eq:inter:local} for $S$ and $\tilde S$ we obtain the estimates
    \begin{align*}
      \norm{S-I_1S}{L^2}&\lesssim(h+N^{-1})^{q+1},&
      \norm{\tilde S-I_2\tilde S}{L^2}&\lesssim\eps(h+N^{-1})^{q+1},\\
      \norm{(S-I_1S)'}{L^2}&\lesssim(h+N^{-1})^q.
    \end{align*}
    For the boundary layer $E_1$ (and similarly for $E_2$) we use additionally the $L^\infty$-stability of $I$ and the choice of the 
    transition point $\lambda$ of the mesh. We get
    \begin{align*}
      \norm{E_1-I_1E_1}{L^2}^2
        &= \norm{E_1-I_1E_1}{L^2(0,\lambda)}^2+\norm{E_1-I_1E_1}{L^2(\lambda,2)}^2\\
        &\lesssim\sum_{\tau_i\in\Omega_h([0,\lambda])}(\eps N^{-1}\max|\psi'|)^{2(q+1)}\eps^{2(m-(q+1))}\norm{\e^{(\frac{q+1}{\sigma}-1)\frac{x\beta }{\eps}}}{L^2(\tau_i)}^2+\norm{E_1}{L^\infty(\lambda,2)}^2\\
        &\lesssim \eps^{2m}((N^{-1}\max|\psi'|)^{q+1}+N^{-2\sigma})
         \lesssim \eps^{2m}(N^{-1}\max|\psi'|)^{q+1}
    \end{align*}
    due to $\sigma\geq q+1$. 
    Note, that for $\sigma>q+1$ we could gain another half power of $\eps$.
    For $E_2$ we get the same result and for $W$ we have to replace $m$ with $2$.
    Similarly we have
    \begin{align*}
      \norm{\tilde E-I_2\tilde E}{L^2}&\lesssim \eps^{m-1}(N^{-1}\max|\psi'|)^{q+1},&
      \norm{\tilde W-I_2\tilde W}{L^2((0,1)\cup(1,2))}&\lesssim \eps^2(N^{-1}\max|\psi'|)^{q+1}.
    \end{align*}
    These estimates give the first two results of the lemma. To estimate the derivative of the interpolation error,
    we demonstrate the procedure only for $E_1$, the others follow similarly. It holds
    \begin{align*}
      \norm{(E_1-I_1 E_1)'}{L^2(0,\lambda)}^2
        &\lesssim \sum_{\tau_i\in\Omega_h([0,\lambda])}(\eps N^{-1}\max|\psi'|)^{2q}\eps^{2(m-(q+1))}\norm{\e^{(\frac{q}{\sigma}-1)\frac{x\beta }{\eps}}}{L^2(\tau_i)}^2\\
        &\lesssim \eps^{2m-1}(N^{-1}\max|\psi'|)^{2q},\\
      \norm{(E_1-I_1 E_1)'}{L^2(\lambda,1-\lambda)}
        &\lesssim \norm{E_1'}{L^2(\lambda,1-\lambda)}+N\norm{I_1 E_1}{L^2(\lambda,1-\lambda)}\\
        &\lesssim \eps^{m-1/2}N^{-\sigma}+N^{1-\sigma}\eps^{m}.
    \end{align*}
    In the remaining parts of $\Omega$ we use one of the two techniques above, depending on the local mesh. The other layer parts can be estimated similarly 
    and the proof is complete.
  \end{proof}
  Combining the previous lemmas yields the convergence result in
  the energy norm.
  \begin{thm}[Supercloseness and convergence]\label{thm:conv:Smesh}
    For the solutions $(u,w)\in\U$ of \eqref{eq:varprob} and $(u_h,w_h)\in\V$ of \eqref{eq:discr} it holds under the assumptions
    $u,w\in H^{q+1}(\Omega)$ and $\sigma\geq q+1$, the supercloseness result
    \[
      \tnorm{(I_1u-u_h,I_2w-w_h)}\lesssim(h+N^{-1}\max|\psi'|)^{q+1}
    \]
    and the convergence result
    \[
      \tnorm{(u-u_h,w-w_h)}\lesssim(h+N^{-1}\max|\psi'|)^q.
    \]
  \end{thm}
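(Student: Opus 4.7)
The plan is to start from the error inequality derived just before Lemma~\ref{lem:error_estimation} from coercivity and Galerkin orthogonality,
\[
  \tnorm{(\xi_u,\xi_w)}^2 \leq B((\eta_u,\eta_w),(\xi_u,\xi_w)),
\]
so that the supercloseness statement reduces to bounding the right-hand side via Lemma~\ref{lem:error_estimation} by the three interpolation quantities $\norm{\eta_u}{L^2}$, $\norm{\eta_w}{L^2}$ and $\norm{b-\bar b}{L^\infty}\norm{\eta_u'}{L^2}$, each multiplied by $\tnorm{(\xi_u,\xi_w)}$. After dividing through by $\tnorm{(\xi_u,\xi_w)}$, I only need good bounds on those three quantities.

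The first two are direct from Lemma~\ref{lem:inter:Stype}. For the third, I would use that $b$ is smooth and $\bar b$ is the cell-wise average, so on each cell $T_i$ one has $\norm{b-\bar b}{L^\infty(T_i)}\lesssim h_i$, hence globally $\norm{b-\bar b}{L^\infty}\lesssim h+N^{-1}$, where $h\lesssim \eps N^{-1}\max|\psi'|$ is the layer mesh width and $N^{-1}$ the coarse mesh width. Combined with the $\norm{\eta_u'}{L^2}$ estimate from Lemma~\ref{lem:inter:Stype}, this yields the cross term
\[
  (h+N^{-1})\bigl[(h+N^{-1})^q+\eps^{m-1/2}(N^{-1}\max|\psi'|)^q\bigr].
\]

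Putting everything together I collect all the summands produced by Lemmas~\ref{lem:error_estimation} and \ref{lem:inter:Stype}, and show that each is dominated by $(h+N^{-1}\max|\psi'|)^{q+1}$. Here I would use the elementary inequalities $\eps\leq 1$, $m\geq 1$ (to absorb the prefactors $\eps^{m}$, $\eps^{m-1}$ and $\eps^{m-1/2}$), and $h+N^{-1}\lesssim N^{-1}\max|\psi'|$ (which holds because $\max|\psi'|\geq 1$ and $h\lesssim \eps N^{-1}\max|\psi'|$). This yields the supercloseness claim. The convergence result then follows from the triangle inequality $\tnorm{(u-u_h,w-w_h)}\leq \tnorm{(\eta_u,\eta_w)}+\tnorm{(I_1u-u_h,I_2w-w_h)}$, where the first term is estimated by Lemma~\ref{lem:inter:Stype} and is dominated by $\norm{\eta_u'}{L^2}\lesssim (h+N^{-1}\max|\psi'|)^q$, losing one order with respect to supercloseness, and the second is already bounded by the previous step.

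There is no deep analytical obstacle; the main subtlety is purely the bookkeeping of $\eps$-powers. In particular, one has to verify case-by-case for $m=1$ and $m=2$ that the weakest term $\eps^{m-1}(N^{-1}\max|\psi'|)^{q+1}$ coming from $\norm{\eta_w}{L^2}$ and the cross term involving $(h+N^{-1})\eps^{m-1/2}(N^{-1}\max|\psi'|)^q$ both remain of the claimed supercloseness order, and that no stray $N^{-\sigma}$-type contribution survives, which is ensured by the assumption $\sigma\geq q+1$ built into Lemma~\ref{lem:inter:Stype}.
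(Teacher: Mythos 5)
Your proposal is correct and follows exactly the paper's argument: combine the error inequality from coercivity and Galerkin orthogonality with Lemma~\ref{lem:error_estimation}, substitute the interpolation estimates of Lemma~\ref{lem:inter:Stype} (together with the smoothness bound $\norm{b-\bar b}{L^\infty}\lesssim h+N^{-1}$), absorb the $\eps$-powers using $m\geq1$, and then use the triangle inequality for the convergence estimate. The paper states this in two sentences; you merely spell out the same bookkeeping.
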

  \begin{proof}
    Substituting the interpolation error estimates into Lemma~\ref{lem:error_estimation} gives the supercloseness
    result. This, together with a triangle inequality and the interpolation error again, gives the convergence
    result.
  \end{proof}
  \begin{rem}\label{rem:super}
    Since the discrete error converges with a higher order than the error itself, we have a supercloseness phenomenon.
    We can exploit this by applying a post-processing with an interpolation on a macro mesh of the discrete solution into a polynomial space
    of one degree higher, see e.g. \cite{ST03,Fr12} for a 2d setting of this approach. 
    The post-processed solution then converges with order $q+1$ (assuming $\sigma\geq q+2$ and regularity of the solution).
    
    We also obtain optimal error convergence in the $L^2$-norm
    \begin{align*}
      \norm{u-u_h}{L^2}+\norm{w-w_h}{L^2}
      &\leq \norm{\eta_u}{L^2}+\norm{\xi_u}{L^2}+\norm{\eta_w}{L^2}+\norm{\xi_w}{L^2}\\
      &\lesssim(h+N^{-1}\max|\psi'|)^{q+1}.
    \end{align*}
  \end{rem}
  
\section{Coarser meshes}\label{sec:coarse}
  According to our solution decomposition the inner layers $W$ and $\tilde W$ are weak layers
  and we will modify the meshes near $x=1$. Note that $E$ and $\tilde E$ are also weak layers 
  and similar approaches can be used for them near the boundary. 
  
  \subsection{Weak equidistant mesh}
  In \cite{BFrLR22b} the solution also contained a weak layer. This allowed a modification of the mesh 
  in order to coarsen the layer part of the mesh. We will apply this mesh idea here too, adapted to the even weaker inner layer.
  As mentioned above, this could also be done for the boundary layers.
  
  Let us define a second transition point in addition to the transition point $\lambda$ 
  \[
    \mu:=\begin{cases}
           \frac{1}{4},&q\leq 2,\\
           \min\left\{\frac{\eps^{1-\frac{5}{2(q+1)}}}{\beta },\frac{1}{4}\right\},&q>2.
         \end{cases}
  \]
  For $q>2$ we assume $\eps$ to be small enough such that $\mu<\frac{1}{4}$.
  We then use an S-type mesh with $N/8$ cells in each $(0,\lambda)$ and $(2-\lambda,2)$ as before, and choose equidistant meshes
  in $(\lambda,1-\mu)$, $(1-\mu,1+\mu)$ and $(1+\mu,2-\lambda)$ with $N/4$ cells each. Let $\tilde h$ denote the 
  mesh width in $(1-\mu,1+\mu)$. For it holds
  \[
    \tilde h\lesssim \begin{cases}
                       N^{-1},&q\leq 2,\\
                       N^{-1}\eps^{1-\frac{5}{2(q+1)}},&q>2
                     \end{cases}
    \rarrow
    \tilde h\lesssim N^{-1}.
  \]
  According to the previous analysis, we only need to estimate the interpolation error on this mesh in order to obtain the approximation error and the convergence error.

  \begin{lem}\label{lem:inter:weakeq}
    Assuming $\e^{-\eps^{-1/q}}\leq(h+N^{-1})^{q-2}$, we have 
    on a mesh consisting of an S-type mesh with $\sigma\geq q+1$ for the boundary layers and an equidistant weak mesh
    for the interpolation operator $I$
    \begin{align*}
      \norm{u-I_1 u}{L^2}&\lesssim (h+N^{-1})^{q+1}+\eps(N^{-1}\max|\psi'|)^{q+1},\\
      \norm{w-I_2 w}{L^2}&\lesssim (h+N^{-1})^{q+1/2},\\
      \norm{(u-I_1 u)'}{L^2}&\lesssim (h+N^{-1})^q+\eps^{1/2}(N^{-1}\max|\psi'|)^q.
    \end{align*}
  \end{lem}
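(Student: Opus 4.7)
The plan is to combine the solution decomposition of Theorem~\ref{thm:soldec}, writing $u=S+E_1+E_2+W_1+W_2$ and analogously for $w$, with a cell-wise application of the local bound~\eqref{eq:inter:local}, treating separately the three mesh regions: the S-type boundary-layer pieces $(0,\lambda)\cup(2-\lambda,2)$, the equidistant outer pieces $(\lambda,1-\mu)\cup(1+\mu,2-\lambda)$ of mesh width $\lesssim N^{-1}$, and the inner equidistant piece $(1-\mu,1+\mu)$ of mesh width~$\tilde h$.

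First I dispose of the easy pieces. For the smooth components, $|S^{(q+1)}|\lesssim 1$ and $|\tilde S^{(q+1)}|\lesssim\eps$ together with \eqref{eq:inter:local} give $\norm{S-I_1S}{L^2}\lesssim (h+N^{-1})^{q+1}$ and $\norm{\tilde S-I_2\tilde S}{L^2}\lesssim\eps(h+N^{-1})^{q+1}$, with the analogous bound for the derivative carrying one power less. For the boundary layers $E_j$ and $\tilde E_j$, the analysis of Lemma~\ref{lem:inter:Stype} applies verbatim on $(0,\lambda)\cup(2-\lambda,2)$ (using $\sigma\geq q+1$), and outside these subintervals these components are uniformly of size $N^{-\sigma}$ and are absorbed using $L^\infty$-stability of $I$.

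The main new step concerns the inner layers $W_j$ and $\tilde W_j$ on the inner equidistant region $(1-\mu,1+\mu)$. The pointwise bounds from Section~\ref{sec:method} together with $\int\e^{-2\beta|x-1|/\eps}\,\mathrm{d}x\lesssim\eps$ yield $\norm{W^{(q+1)}}{L^2(1-\mu,1+\mu)}\lesssim\eps^{5/2-q}$ and $\norm{\tilde W^{(q+1)}}{L^2(1-\mu,1+\mu)}\lesssim\eps^{3/2-q}$. Applying \eqref{eq:inter:local} cell-wise with $s=q+1$, summing, and inserting the two cases of $\tilde h$ (whose definition via $\mu$ is tailored precisely to this purpose for $q>2$) produces $\tilde h^{q+1}\eps^{5/2-q}\lesssim\eps(h+N^{-1})^{q+1}$ for the $W$-contribution to $\norm{u-I_1u}{L^2}$, at the desired order. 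For $\tilde W$ the same computation yields $\tilde h^{q+1}\eps^{3/2-q}$, which for $q=2$ can exceed $(h+N^{-1})^{q+1/2}$ in the regime $\eps\lesssim(h+N^{-1})$; there I instead use the size bound $\norm{\tilde W}{L^2(1-\mu,1+\mu)}\lesssim\eps^{5/2}$ together with $L^\infty$-stability of $I_2$ to conclude directly. Outside $(1-\mu,1+\mu)$ the layer functions are of size $\eps^{3}\e^{-\beta\mu/\eps}$ and $\eps^{2}\e^{-\beta\mu/\eps}$; combining the definition of $\mu$ with the technical assumption $\e^{-\eps^{-1/q}}\leq(h+N^{-1})^{q-2}$ (which dominates $\e^{-\beta\mu/\eps}$ since $1/q<5/(2(q+1))$ for $q\geq 1$) brings these under the target bound, and $L^\infty$-stability closes the estimate on the outer regions.

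The derivative estimate $\norm{(u-I_1u)'}{L^2}$ is handled completely analogously by invoking \eqref{eq:inter:local} with $\ell=1$ throughout; the extra factor $\eps^{1/2}$ on the boundary-layer contribution is exactly what the $\ell=1$ local bound produces on the S-type cells relative to the $\ell=0$ bound. The hard part of the whole argument is the $\tilde W$-contribution to $\norm{w-I_2w}{L^2}$: because $\tilde W$ carries one fewer power of $\eps$ than $W$, no single cell-wise estimate controls it uniformly in $\eps$, and the case-split between the $\tilde h^{q+1}\norm{\tilde W^{(q+1)}}{L^2}$-type bound and the direct bound $\norm{\tilde W}{L^2}\lesssim\eps^{5/2}$ is precisely the mechanism that produces the half-power loss visible in the exponent $q+1/2$.
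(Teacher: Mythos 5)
Your overall organisation is the right one --- decompose $u$ and $w$ via Theorem~\ref{thm:soldec}, re-use the S-type analysis for $S$, $\tilde S$, $E$, $\tilde E$, and concentrate on $W$, $\tilde W$ in the three mesh regions --- and your observation that $\tilde W$ is the critical term and that the half-power $q+\tfrac12$ comes from a failure of any single-$s$ estimate is a good one. But your replacement of the paper's key tool does not close the argument, and for $q>2$ the gap is genuine.

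The mechanism the paper uses is the geometric mean of two instances of \eqref{eq:inter:local} with consecutive differentiation orders $s$ and $s+1$: on an equidistant cell of width $h_i$,
\begin{align*}
\norm{v-Iv}{L^2(\tau_i)}
&\lesssim \bigl(h_i^{s}\norm{v^{(s)}}{L^2(\tau_i)}\bigr)^{1/2}
          \bigl(h_i^{s+1}\norm{v^{(s+1)}}{L^2(\tau_i)}\bigr)^{1/2}.
\end{align*}
Because for a layer function $\norm{\tilde W^{(s)}}{L^2}\lesssim\eps^{5/2-s}$ (and similarly $\norm{W^{(s)}}{L^2}\lesssim\eps^{7/2-s}$), this product has the crucial property that the half-integer $\eps$-powers cancel \emph{exactly}:
$\eps^{(5/2-s)/2}\eps^{(5/2-(s+1))/2}=\eps^{2-s}$, with the specific choice $s=2,3$ for $\tilde W$ (resp.\ $s=3,4$ for $W$) leaving no residual $\eps$. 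The outcome outside $(1-\mu,1+\mu)$ is a pure $N^{-5/2}$ (resp.\ $N^{-7/2}$) in front of the exponential decay, and then the technical hypothesis $\e^{-\eps^{-1/q}}\leq(h+N^{-1})^{q-2}$ finishes via $N^{-5/2}(h+N^{-1})^{q-2}\leq(h+N^{-1})^{q+1/2}$. Your substitute --- $L^\infty$-stability and the size bound --- leaves the $\eps$-prefactor in place: outside $(1-\mu,1+\mu)$ you get, as you state, $\eps^2\e^{-\beta\mu/\eps}$ for $\tilde W$, hence $\eps^2(h+N^{-1})^{q-2}$ after invoking the technical hypothesis. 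To reach $(h+N^{-1})^{q+1/2}$ you would additionally need $\eps^2\lesssim(h+N^{-1})^{5/2}$, which on a Shishkin mesh (where $h+N^{-1}\lesssim N^{-1}$) amounts to $\eps\lesssim N^{-5/4}$. That is a restriction the lemma does not impose, and in the complementary regime, e.g.\ $\eps\sim N^{-1}$, your bound exceeds the target. The same failure appears for the $W$-contribution, where $\eps^3$ does not trade for $(h+N^{-1})^3$ uniformly.

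Two further, smaller points. For $q=2$ your two-case split (direct $s=q+1$ bound for $\eps\gtrsim N^{-1}$, a size/stability bound for $\eps\lesssim N^{-1}$) can be made to work if the $L^\infty$-stability is applied cell-wise so that the sum reproduces $\norm{\tilde W}{L^2}$, but as written it is not spelled out; the paper's single global geometric-mean with $s=2,3$ is cleaner and avoids the case split entirely. Finally, the intermediate claim $\tilde h^{q+1}\eps^{5/2-q}\lesssim\eps(h+N^{-1})^{q+1}$ is false for $q=2$ (take $\eps\sim N^{-1/2}$); what you actually need is only the weaker $\lesssim(h+N^{-1})^{q+1}$, which does hold, so no harm --- but it signals the same mis-tracking of $\eps$-powers that causes the real gap above.

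In short: add the geometric-mean form of \eqref{eq:inter:local} with $s\in\{2,3\}$ for $\tilde W$ and $s\in\{3,4\}$ for $W$, apply it both inside $(1-\mu,1+\mu)$ for $q=2$ and outside $(1-\mu,1+\mu)$ for $q>2$. Then the stated assumption $\e^{-\eps^{-1/q}}\leq(h+N^{-1})^{q-2}$ suffices and no $\eps$-versus-$N$ regime split is needed.
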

  \begin{proof}
    The estimates for $S,\, \tilde S,\, E$ and $\tilde E$ are as in Lemma~\ref{lem:inter:Stype}, also for the derivatives.
    So we only need to look at the inner layers. 
    For $\tilde W$ we distinguish between the polynomial degrees. For $q=1$ we estimate directly
    with \eqref{eq:inter:local} and $s=2$
    \begin{align*}
      \norm{\tilde W-I_2\tilde W}{L^2}
        &\lesssim (h+N^{-1})^2\norm{(\tilde W)''}{L^2}
         \lesssim \eps^{1/2}(h+N^{-1})^2
    \end{align*}
    and for $q=2$ we use $s=2$ and $s=3$ together
    \begin{align*}
      \norm{\tilde W-I_2\tilde W}{L^2}
        &\lesssim ((h+N^{-1})^2\norm{(\tilde W)''}{L^2})^{1/2}
                  ((h+N^{-1})^3\norm{(\tilde W)'''}{L^2})^{1/2}
         \lesssim (h+N^{-1})^{5/2}.
    \end{align*}
    For $q>2$ the same trick can be applied outside the inner layer region,
    \begin{align*}
      \norm{\tilde W-I_2\tilde W}{L^2((0,1-\mu)\cap(1+\mu,2))}
        &\lesssim (N^{-2}\norm{(\tilde W)''}{L^2((0,1-\mu)\cap(1+\mu,2))})^{1/2}\cdot\\&\hspace*{0.6cm}
                  (N^{-3}\norm{(\tilde W)'''}{L^2((0,1-\mu)\cap(1+\mu,2))})^{1/2}\\
        &\lesssim N^{-5/2}\e^{-\eps^{-1/q}}
         \lesssim N^{-(q+1/2)},
    \end{align*}
    while inside we estimate
    \begin{align*}
      \norm{\tilde W-I_2\tilde W}{L^2(1-\mu,1+\mu)}
        &\lesssim \tilde h^{q+1}\norm{\tilde W^{(q+1)}}{L^2(1-\mu,1+\mu)}
         \lesssim N^{-(q+1)}\eps^{q+1-\frac{5}{2}}\eps^{3/2-q}
         \lesssim N^{-(q+1)}.
    \end{align*}
    For $W$ it follows by the same steps
    \[
      \norm{W-I_1 W}{L^2}
        \lesssim (h+N^{-1})^{q+1},
    \]
    using \eqref{eq:inter:local} with $s=3$ and $s=4$ where appropriate. 
    The estimation of the derivative follows also the same steps and we obtain
    \[
      \norm{(W-I_1 W)'}{L^2}
        \lesssim (h+N^{-1})^{q}.
    \]
    Combining all these estimates completes the proof.
  \end{proof}
  The convergence proof is now a straightforward consequence. 
  \begin{thm}\label{thm:conv:weakeq}
    Under the conditions $\sigma\geq q+1$ and $\e^{-\eps^{-1/q}}\leq(h+N^{-1})^{q-2}$ it holds for 
    the exact solution $(u,w)$ of \eqref{eq:varprob} and the discrete solution $(u_h,w_h)$ of \eqref{eq:discr}
    the supercloseness estimate
    \[
      \tnorm{(I_1 u-u_h,I_2 w-w_h)}\lesssim (h+N^{-1}\max|\psi'|)^{q+1/2}
    \]
    and the convergence estimate
    \[
      \tnorm{(u-u_h,w-w_h)}\lesssim (h+N^{-1}\max|\psi'|)^q.
    \]
  \end{thm}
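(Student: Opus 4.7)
The plan is to mirror the structure of the proof of Theorem~\ref{thm:conv:Smesh}, since the bilinear form, coercivity estimate, and Galerkin orthogonality are all independent of the mesh used. Writing $u - u_h = \eta_u + \xi_u$ and $w - w_h = \eta_w + \xi_w$, coercivity \eqref{eq:coercivity} together with \eqref{eq:Galerkin_orthogonality} gives
\[
  \tnorm{(\xi_u,\xi_w)}^2 \leq B((\eta_u,\eta_w),(\xi_u,\xi_w)),
\]
and Lemma~\ref{lem:error_estimation}, which makes no reference to the mesh, reduces the task to controlling $\norm{\eta_u}{L^2}$, $\norm{\eta_w}{L^2}$ and $\norm{b-\bar b}{L^\infty}\norm{\eta_u'}{L^2}$.

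Next I would invoke the new interpolation bounds from Lemma~\ref{lem:inter:weakeq} in place of Lemma~\ref{lem:inter:Stype}, together with the elementary smoothness estimate $\norm{b-\bar b}{L^\infty}\lesssim h + N^{-1}$. Inserting the three bounds into Lemma~\ref{lem:error_estimation} and dividing out $\tnorm{(\xi_u,\xi_w)}$, the key observation is that the term $\norm{\eta_w}{L^2}\lesssim (h+N^{-1})^{q+1/2}$ is now the weakest ingredient on the right-hand side: it is the only contribution of fractional order, while $\norm{\eta_u}{L^2}$ and $(h+N^{-1})\norm{\eta_u'}{L^2}$ both yield $(h+N^{-1}\max|\psi'|)^{q+1}$. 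This produces the supercloseness bound $\tnorm{(I_1u-u_h,I_2w-w_h)}\lesssim (h+N^{-1}\max|\psi'|)^{q+1/2}$.

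For the convergence result I would use the triangle inequality $\tnorm{(u-u_h,w-w_h)}\leq \tnorm{(\eta_u,\eta_w)}+\tnorm{(\xi_u,\xi_w)}$ and control the interpolation part directly by Lemma~\ref{lem:inter:weakeq}. Here the rate is dictated by $\norm{\eta_u'}{L^2}\lesssim (h+N^{-1}\max|\psi'|)^q$, which absorbs the $(h+N^{-1}\max|\psi'|)^{q+1/2}$ contribution from the discrete error and yields the announced order~$q$.

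The main obstacle is not in the argument itself but in verifying that the small term $\eps^{1/2}(h+N^{-1})(N^{-1}\max|\psi'|)^q$ coming from $\norm{b-\bar b}{L^\infty}\norm{\eta_u'}{L^2}$ is subdominant to $(h+N^{-1}\max|\psi'|)^{q+1/2}$. This should follow from $h\lesssim\eps$ inside the boundary layer regions together with $\eps^{1/2}\lesssim (N^{-1}\max|\psi'|)^{1/2}$, which in turn is guaranteed by the standing assumption $\lambda=\sigma\eps\ln(N)/\beta\leq 1/4$ used to define the mesh. Apart from this bookkeeping step, the proof is a direct substitution and is essentially automatic once Lemmas~\ref{lem:error_estimation} and~\ref{lem:inter:weakeq} are in hand.
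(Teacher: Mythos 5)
Your proof is essentially the paper's: the paper offers no explicit argument for Theorem~\ref{thm:conv:weakeq}, simply observing that it follows by substituting Lemma~\ref{lem:inter:weakeq} into the mesh-independent machinery (Galerkin orthogonality, coercivity, Lemma~\ref{lem:error_estimation}) exactly as in Theorem~\ref{thm:conv:Smesh}, which is precisely your route. One small correction to the final ``bookkeeping'' step: the inequality $\eps^{1/2}\lesssim (N^{-1}\max|\psi'|)^{1/2}$ does \emph{not} follow from $\lambda\leq\frac14$ (that only gives $\eps\lesssim(\ln N)^{-1}$, which on a Bakhvalov S-mesh, where $\max|\psi'|\leq 2$, is far weaker than $\eps\lesssim N^{-1}$), and indeed the whole point of these meshes is to avoid any such $\eps$-vs-$N$ restriction. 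Fortunately you do not need it: since $\max|\psi'|\geq 1$ one has $h+N^{-1}\leq h+N^{-1}\max|\psi'|$, hence
\[
  \eps^{1/2}(h+N^{-1})(N^{-1}\max|\psi'|)^q
  \;\leq\; \eps^{1/2}\,(h+N^{-1}\max|\psi'|)^{q+1}
  \;=\;\eps^{1/2}(h+N^{-1}\max|\psi'|)^{1/2}\,(h+N^{-1}\max|\psi'|)^{q+1/2},
\]
and the prefactor $\eps^{1/2}(h+N^{-1}\max|\psi'|)^{1/2}\lesssim 1$ trivially, so the term is subdominant with no extra assumption. With that repair the argument is complete and matches the intended proof.
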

  \begin{rem}
    The condition $\e^{-\eps^{-1/q}}\leq (h+N^{-1})^{q-2}$ limits the applicability
    of the mesh for higher values of $q$ and small $\eps$, see also \cite{BFrLR22b}, where 
    the condition was similar. For $q\leq 2$ we have no restriction, and for $q=3$ the 
    condition is satisfied for reasonable choices of $N$ and $\eps$.
    Nevertheless, for $q>2$ in the next subsection we provide another coarse mesh 
    which does not restrict the choices of $N$ and $\eps$.
  \end{rem}

  \subsection{Weak S-type mesh}
  In this section we use an S-type mesh for the inner layer in the case of $q>2$, but we modify its transition 
  point with respect to the weak layer. Let 
  \[
    \nu:=\min\left\{\frac{\sigma}{\beta }\eps^{1-\frac{5}{2(q+1)}}\ln N,\frac{1}{4}\right\}\geq\lambda
  \]
  and the mesh is defined as in \eqref{eq:Smesh} with the obvious modification of using $\nu$ instead of $\lambda$
  near $x=1$.
  One consequence is the following bound on the mesh width $h_i$ in the inner layer region
  \[
    h_i\leq \frac{\sigma}{\beta }\eps^{\frac{2q-3}{2(q+1)}}N^{-1}\max|\psi'|\exp\left({\frac{\beta x}{\sigma}\eps^{-\frac{2q-3}{2(q+1)}}}\right)\lesssim\tilde h
  \]
  for $x\in(x_{i-1},x_i)$ and the maximum mesh width $h\lesssim\tilde h\lesssim\eps^{1-\frac{5}{2(q+1)}}$.
  
  As in the previous section, we will first look at the interpolation error.
  \begin{lem}\label{lem:inter:weakStype}
    Let $\sigma\geq q+1$. Then we have for the interpolation operator $I$ on the weak S-type mesh
    \begin{align*}
      \norm{u-I_1 u}{L^2}&\lesssim (\tilde h+N^{-1}\max|\psi'|)^{q+1},\\
      \norm{w-I_2 w}{L^2}&\lesssim (\tilde h+N^{-1}\max|\psi'|)^{q+1},\\
      \norm{(u-I_1 u)'}{L^2}&\lesssim (\tilde h+N^{-1}\max|\psi'|)^q.
    \end{align*}
  \end{lem}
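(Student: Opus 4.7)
My plan is to mirror the structure of the proof of Lemma \ref{lem:inter:Stype} as closely as possible, reusing its estimates on those parts of the mesh that are unchanged and performing the analogous calculation only for the inner layer components on the modified (weak S-type) portion. First I would invoke the decompositions $u = S + E_1 + E_2 + W_1 + W_2$ and $w = \tilde S + \tilde E_1 + \tilde E_2 + \tilde W_1 + \tilde W_2$ from Theorem~\ref{thm:soldec}. Since the mesh near the boundaries and in the outer region $[\lambda, 1-\nu] \cup [1+\nu, 2-\lambda]$ is still an S-type mesh with transition point $\lambda$, the contributions of $S$, $\tilde S$, $E$ and $\tilde E$ to the interpolation error are controlled exactly as in Lemma~\ref{lem:inter:Stype}, giving bounds of the desired form (with $h \leq \tilde h$).

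For the inner layer terms $W$ and $\tilde W$ I would split the analysis into the weak-layer region $(1-\nu, 1+\nu)$ and its complement. Outside this region, the exponential decay yields $|W_1(1-\nu)| \lesssim \eps^3 \e^{-\beta\nu/\eps} \leq \eps^3 N^{-\sigma \eps^{-5/(2(q+1))}}$, which together with the bound $\sigma \geq q+1$ is much smaller than $(N^{-1}\max|\psi'|)^{q+1}$; combining $L^\infty$-stability of $I$ with the local estimate \eqref{eq:inter:local} disposes of this piece, and the analogous computation handles $\tilde W$. Inside $(1-\nu, 1+\nu)$, I would carry out the S-type calculation for $W_1$ on $(1-\nu, 1)$ (and symmetrically for $W_2$, $\tilde W_1$, $\tilde W_2$), writing
\[
  \norm{W_1 - I_1 W_1}{L^2(\tau_i)}^2 \lesssim h_i^{2(q+1)} \norm{W_1^{(q+1)}}{L^2(\tau_i)}^2,
\]
plugging in $|W_1^{(q+1)}(x)| \lesssim \eps^{2-q}\e^{-\beta(1-x)/\eps}$ and the layer-region mesh width bound
\[
  h_i \lesssim \eps^{1-\frac{5}{2(q+1)}} N^{-1}\max|\psi'| \, \e^{\beta(1-x)/(\sigma \eps^{1-5/(2(q+1))})},
\]
and then summing over cells.

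The heart of the argument, and where the main bookkeeping must be done carefully, is checking that the powers of $\eps$ and $\eps^{1-5/(2(q+1))}$ telescope correctly. For $W_1$, the product $(\eps^{1-5/(2(q+1))})^{q+1} \cdot \eps^{2-q} = \eps^{1/2}$ already gives a small factor in front of $(N^{-1}\max|\psi'|)^{q+1}$; for the more delicate $\tilde W_1$ (where $|\tilde W_1^{(q+1)}| \lesssim \eps^{1-q}\e^{-\beta(1-x)/\eps}$) the factor $\eps^{-1/2}$ is exactly compensated by the $\eps$ gained from integrating $\e^{((q+1)/\sigma)\beta(1-x)/\eps^{1-5/(2(q+1))} - \beta(1-x)/\eps}$ against $dx$, using that $\sigma \geq q+1$ makes the exponent negative and of order $-1/\eps$. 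These two facts together produce the clean $(N^{-1}\max|\psi'|)^{q+1}$ bound on the inner layer piece, which combined with the outer estimates yields the first two stated inequalities; the derivative estimate is obtained by the same two-region argument with the analogous local estimate of order $q$ and will be where I expect the exponent arithmetic to feel most fragile.
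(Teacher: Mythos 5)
Your plan mirrors the paper's proof closely: invoke the solution decomposition, note that only the inner-layer terms need a fresh calculation, estimate $W$ and $\tilde W$ inside $(1-\nu,1+\nu)$ via \eqref{eq:inter:local} with the weak-layer mesh-width bound and outside via $L^\infty$-stability and exponential smallness, and check that the $\eps$-powers close up. That part is correct in substance (modulo cosmetic bookkeeping: the $\eps^{-1/2}$ from $h_i^{q+1}\tilde W_1^{(q+1)}$ is compensated by the $\eps^{1/2}$ coming from the $L^2$ norm of the decaying exponential, not by a full $\eps$, but the net power is as you describe).

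There is, however, one genuine omission. You assert that the contributions of $S$, $\tilde S$, $E$, $\tilde E$ are ``controlled exactly as in Lemma~\ref{lem:inter:Stype}, giving bounds of the desired form (with $h\leq\tilde h$).'' For the $L^2$ estimates this is fine, but for the derivative estimate $\norm{(u-I_1u)'}{L^2}$ the mesh in $(1-\nu,1)\cup(1,1+\nu)$ is now an S-type region with maximum width $\sim \eps^{1-5/(2(q+1))}N^{-1}$ rather than $\sim\eps N^{-1}$, and this region was previously part of either the fine boundary-layer mesh or the equidistant middle mesh where the $E$-layer was treated differently. Since $I_1$ is local, one must check anew that $(E-I_1E)'$ stays controlled there; the paper does this explicitly, either via \eqref{eq:inter:local} with $s=2$ (which only works for $q>1$) or, more robustly, via the $W^{1,\infty}$-stability of $I_1$, obtaining
\[
  \norm{(E_1-I_1E_1)'}{L^2(1-\nu,1)}\lesssim\norm{E_1'}{L^2(1-\nu,1)}+\nu^{1/2}\norm{E_1'}{L^\infty(1-\nu,1)}\lesssim \eps^{m-1}N^{-\sigma}\lesssim N^{-\sigma}.
\]
You should add the analogous sentence; without it, the argument leaves open that the coarser mid-interval mesh could degrade the boundary-layer interpolation, even though the quantities involved are exponentially small.
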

  \begin{proof}
    As for Lemma~\ref{lem:inter:weakeq}, we only need to estimate the interpolation errors for the inner
    layers. The estimates for the remaining terms only need to be modified by using $h\lesssim\tilde h$ due to 
    the new maximum mesh width in the inner layer region.
    
    For $\tilde W$ we obtain in the layer region with \eqref{eq:inter:local}
    \begin{align*}
      \norm{\tilde W-I_2\tilde W}{L^2(1-\nu,1)}^2
        &\lesssim \sum_{\tau_i\in\Omega_h([1-\nu,1])}\norm{h_i^{q+1}\tilde W_1^{(q+1)}}{L^2(\tau_i)}^2\\
        &\lesssim \eps^{-1}(N^{-1}\max|\psi'|)^{2(q+1)}\bignorm{\exp\left(\left(\frac{q+1}{\sigma\eps^{\frac{2q-3}{2(q+1)}}}-\frac{1}{\eps}\right)\beta x\right)}{L^2(1-\nu,1)}^2\\
        &\lesssim (N^{-1}\max|\psi'|)^{2(q+1)}
    \end{align*}
    due to $\sigma\geq q+1$ and $\eps^{\frac{2q-3}{2(q+1)}}=\eps^{1-\frac{5}{2(q+1)}}>\eps$. In the other layer region the same result holds. 
    Outside the layer region we have by $\nu>\lambda$ and the $L^\infty$-stability of the interpolation operator
    \[
      \norm{\tilde W-I_2\tilde W}{L^2((0,1-\nu)\cap(1+\nu,2))}
        \lesssim \norm{\tilde W}{L^\infty((0,1-\nu)\cap(1+\nu,2)))}
        \lesssim |\tilde W(1-\nu)|
        \lesssim |\tilde W(1-\lambda)|
        \lesssim \eps^2 N^{-\sigma}.
    \]
    For $W$ we obtain analogously
    \[
      \norm{W-I_1 W}{L^2}\lesssim \eps^2 (N^{-1}\max|\psi'|)^{q+1}.
    \]
    Thus, the first two estimates are proven. For the derivative we look at $(0,\lambda)$, $(\lambda,1-\nu)$ and $(1-\nu,1)$ separately:
    \begin{align*}
      \norm{(W_1-I_1 W_1)'}{L^2(0,\lambda)}
        &\lesssim h^q\norm{W_1^{(q+1)}}{L^2(0,\lambda)}
         \lesssim \eps^{q}|W_1^{(q+1)}(1-\lambda)|
         \lesssim \eps^2 N^{-\sigma},\\
      \norm{(W_1-I_1 W_1)'}{L^2(\lambda,1-\nu)}
        &\lesssim \norm{W_1'}{L^2(\lambda,1-\nu)}+N\norm{W_1}{L^\infty(\lambda,1-\nu)}
         \lesssim \eps^{5/2}N^{-\sigma}+\eps^3N^{-(\sigma-1)},\\
      \norm{(W_1-I_1 W_1)'}{L^2(1-\nu,1)}
        &\lesssim \eps^{\frac{2q-3}{2(q+1)}q}(N^{-1}\max|\psi'|)^q \eps^{1-q}\bignorm{\exp\left( \left( \frac{q}{\sigma\eps^{\frac{2q-3}{2(q+1)}}}-\frac{1}{\eps} \right) \right)}{L^2(1-\nu,1)}\\
        &\lesssim \eps^{\frac{5}{2(q+1)}}(N^{-1}\max|\psi'|)^q.
    \end{align*}
    On the other parts of the domain we get the same bounds.
    When estimating $(S-I_1 S)'$ we can proceed as before, but for $(E-I_1 E)'$ we have to be careful in the middle layer region.
    Direct estimation using \eqref{eq:inter:local} with $s=2$ works for $q>1$, but using the $W_1^\infty$-stability of $I_1$
    works for all $q$:
    \begin{align*}
      \norm{(E_1-I_1 E_1)'}{L^2(1-\nu,1)}
        &\lesssim \norm{E_1'}{L^2(1-\nu,1)}+\nu^{1/2}\norm{E_1'}{L^\infty(1-\nu,1)}
         \lesssim 
                  \eps^{m-1}N^{-\sigma}
         \lesssim N^{-\sigma}
    \end{align*}
    due to $m\geq 1$. Combining all estimates finishes the proof.
  \end{proof}
  With the interpolation error result we obtain virtually the same result as in Theorem~\ref{thm:conv:Smesh}.
  \begin{thm}[Supercloseness and convergence]\label{thm:conv:weakSmesh}
    For the solutions $(u,w)\in\U$ of \eqref{eq:varprob} and $(u_h,w_h)\in\V$ of \eqref{eq:discr} it holds under the assumptions
    $u,w\in H^{q+1}(\Omega)$ and $\sigma\geq q+1$, the supercloseness result
    \[
      \tnorm{(I_1u-u_h,I_2w-w_h)}\lesssim(\tilde h+N^{-1}\max|\psi'|)^{q+1}
    \]
    and the convergence result
    \[
      \tnorm{(u-u_h,w-w_h)}\lesssim(\tilde h+N^{-1}\max|\psi'|)^q.
    \]
  \end{thm}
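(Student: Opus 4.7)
The plan is to follow exactly the same pipeline as in the proof of Theorem~\ref{thm:conv:Smesh}, with Lemma~\ref{lem:inter:weakStype} playing the role of Lemma~\ref{lem:inter:Stype}. Specifically, I would start from Galerkin orthogonality \eqref{eq:Galerkin_orthogonality} and coercivity \eqref{eq:coercivity} to obtain
\[
  \tnorm{(\xi_u,\xi_w)}^2
  \leq B((\xi_u,\xi_w),(\xi_u,\xi_w))
  = B((\eta_u,\eta_w),(\xi_u,\xi_w)),
\]
where the decomposition $u-u_h=\eta_u+\xi_u$, $w-w_h=\eta_w+\xi_w$ is unchanged.

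Next I would invoke Lemma~\ref{lem:error_estimation} to bound the right-hand side by
\[
  \bigl(\norm{\eta_u}{L^2}+\norm{\eta_w}{L^2}+\norm{b-\bar b}{L^\infty}\norm{\eta_u'}{L^2}\bigr)\tnorm{(\xi_u,\xi_w)}.
\]
Here smoothness of $b$ together with the maximum mesh width $\tilde h$ gives $\norm{b-\bar b}{L^\infty}\lesssim \tilde h$, so the factor in front of $\norm{\eta_u'}{L^2}$ contributes one extra power of the mesh width. Substituting the three interpolation bounds from Lemma~\ref{lem:inter:weakStype} then yields
\[
  \tnorm{(I_1u-u_h,I_2w-w_h)}=\tnorm{(\xi_u,\xi_w)}
  \lesssim (\tilde h+N^{-1}\max|\psi'|)^{q+1},
\]
which is the claimed supercloseness estimate. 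The convergence estimate follows by writing $u-u_h=\eta_u-\xi_u$ (and similarly for $w$), applying the triangle inequality in the $\tnorm{\cdot}$ norm, bounding the discrete part by the supercloseness result, and the interpolation part again by Lemma~\ref{lem:inter:weakStype} (noting that $\tnorm{(\eta_u,\eta_w)}$ only involves $L^2$ norms of $\eta_u$, $\eta_u'$ and $\eta_w$, so the worst term is $\norm{\eta_u'}{L^2}\lesssim (\tilde h+N^{-1}\max|\psi'|)^q$).

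I expect no genuine obstacle here, since all the work has already been done in Lemma~\ref{lem:inter:weakStype}. The only subtle point worth emphasising is that the extra factor $\norm{b-\bar b}{L^\infty}\lesssim \tilde h$ is precisely what lifts the derivative bound $\norm{\eta_u'}{L^2}\lesssim(\tilde h+N^{-1}\max|\psi'|)^q$ back up to order $q+1$ in the bilinear form estimate; without this, supercloseness would be lost in exactly the same way it would be lost in Theorem~\ref{thm:conv:Smesh}. Hence the proof is essentially a one-line substitution argument, and I would simply write: substituting the estimates of Lemma~\ref{lem:inter:weakStype} into Lemma~\ref{lem:error_estimation} yields supercloseness, and a triangle inequality together with the same interpolation bounds yields the convergence estimate.
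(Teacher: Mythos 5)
Your proposal is correct and reproduces exactly the argument the paper intends: the paper gives no separate proof here but states that ``with the interpolation error result we obtain virtually the same result as in Theorem~\ref{thm:conv:Smesh}'', i.e. substitute the bounds of Lemma~\ref{lem:inter:weakStype} into Lemma~\ref{lem:error_estimation} for supercloseness, then use a triangle inequality and the interpolation bounds again for convergence. Your one remark worth sharpening is that $\norm{b-\bar b}{L^\infty}$ is bounded by the global maximum mesh width, which is $\lesssim \tilde h + N^{-1} \lesssim \tilde h + N^{-1}\max|\psi'|$ (not by $\tilde h$ alone, since the coarse region has width $\lesssim N^{-1}$), but this does not change the conclusion.
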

  \begin{rem}
    Let us compare the two coarser meshes for the inner layers and the results on them.
    \begin{itemize}
      \item The weak equidistant mesh is structurally simpler.
      \item In terms of size with respect to $\eps$, the transition points $\mu$ and
            and $\nu$ are of the same size.
      \item On both meshes convergence of order $q$ is proved.
      \item On the weak S-type mesh we have supercloseness of order $q+1$, while on the weak equidistant mesh we can only prove the order $q+1/2$.
      \item The maximum mesh size $\tilde h$ on weak S-type meshes is for optimal meshes, such as the Bakhvalov S-mesh, of order
            $\ord{\eps^{1-\frac{5}{2(q+1)}}}$. Then we have optimal convergence for $\eps\lesssim N^{-\left( 1+\frac{4}{2q-3} \right)}$. 
    \end{itemize}
    As a result, a practical guide for the inner layer is to use
    \begin{itemize}
      \item for arbitrary $\eps$ and $q\leq 2$ a piecewise equidistant mesh,
      \item for small $\eps$ and $q=3$ a weak equidistant mesh and
      \item otherwise a weak S-type or a classical S-type mesh.
    \end{itemize}
    Using a classical Shishkin mesh for the inner layer in the case of $q\geq 3$ will also yield optimal convergence results (without a logarithmic factor) under the assumption that
    \[
      \eps\lesssim (\ln N)^{-\frac{2q}{5}}
    \]
    which is slightly stronger than
    \[
      \lambda\leq \frac{1}{4}
      \rarrow
      \eps\lesssim (\ln N)^{-1}
    \]
    but much weaker than $\eps\lesssim N^{-1}$.
  \end{rem}
  
\section{Numerical experiments}\label{sec:numerics}
  We consider as first example in $\Omega$
  \begin{gather}\label{eq:ex}
    \eps^2 u^{(4)}(x)-u''(x)+2u(x)+u(x-1)=5
  \end{gather}
  with $u(0)=u'(0)=u(2)=u'(2)=0$, thus $m=1$, and $\Phi(x)=0$ for $x\in(-1,0)$. Figure~\ref{fig:plot}
  \begin{figure}[htb]
    \begin{center}
      \includegraphics{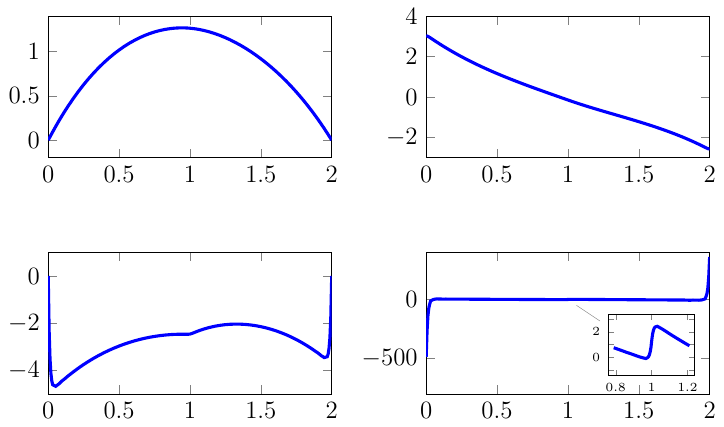}
    \end{center}
    \caption{Solution $u$ and $u'$, $u''$, $u'''$ (left to right, top to bottom), for example \eqref{eq:ex} with $\eps=10^{-2}$.\label{fig:plot}}
  \end{figure}
  shows the solution $u$ and its first three derivatives. It is easy to see, that $u'$ has two boundary layers and $u'''$ an inner layer, visible in the zoom.

  Our second example is almost the same, except for the boundary conditions. We consider in $\Omega$
  \begin{gather}\label{eq:ex2}
    \eps^2 u^{(4)}(x)-u''(x)+2u(x)+u(x-1)=5
  \end{gather}
  with $u(0)=u''(0)=u(2)=u''(2)=0$, thus $m=2$, and $\Phi(x)=0$ for $x\in(-1,0)$. Figure~\ref{fig:plot2}
  \begin{figure}[htb]
    \begin{center}
      \includegraphics{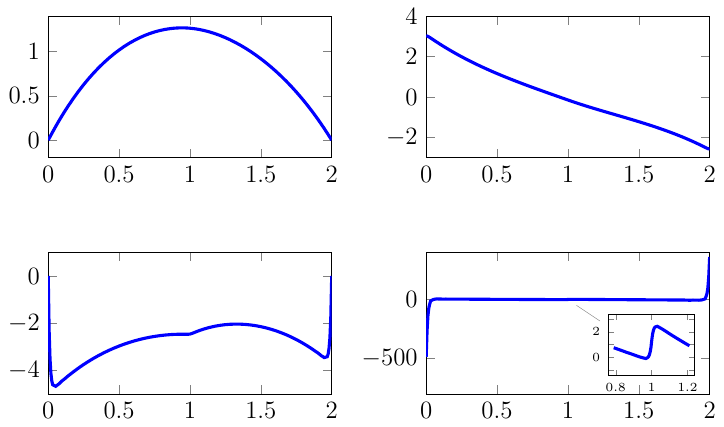}
    \end{center}
    \caption{Solution $u$ and $u'$, $u''$, $u'''$ (left to right, top to bottom), for example \eqref{eq:ex2} with $\eps=10^{-2}$.\label{fig:plot2}}
  \end{figure}
  shows the solution $u$ and its first three derivatives. We see, that now $u''$ has two boundary layers and $u'''$ still has an inner layer, visible in the zoom.

  For our simulations we use the Matlab finite element suite $\mathbb{SOFE}$\footnote{\texttt{github.com/SOFE-Developers/SOFE}} and all computations are 
  done in double precision. Table~\ref{tab:ex1}
  \begin{table}[tb]
    \caption{Errors for example~\eqref{eq:ex} and $\eps=10^{-4}$ on a Bakhvalov S-type mesh for all layers.\label{tab:ex1}}
    \begin{center}
      \begin{tabular}{rr|rcrcrc}
        $q$ & $N$ & \multicolumn{2}{c}{$\tnorm{(u-u_h,w-w_h)}$}
                  & \multicolumn{2}{c}{$\norm{u-u_h}{L^2}$}
                  & \multicolumn{2}{c}{$\norm{w-w_h}{L^2}$}\\
        \hline
        \multirow{3}{*}{1} 
                           &  64 & 5.27e-02 & 1.00 & 1.10e-03 & 2.00 & 3.19e-04 & 1.90\\
                           & 128 & 2.64e-02 & 1.00 & 2.75e-04 & 2.00 & 8.54e-05 & 1.92\\
                           & 256 & 1.32e-02 &      & 6.87e-05 &      & 2.26e-05 &     \\
        \hline
        \multirow{3}{*}{2} 
                           &  64 & 4.51e-04 & 2.00 & 5.60e-06 & 3.00 & 4.18e-05 & 2.86\\
                           & 128 & 1.12e-04 & 2.00 & 7.00e-07 & 3.00 & 5.77e-06 & 2.89\\
                           & 256 & 2.81e-05 &      & 8.76e-08 &      & 7.78e-07 &     \\
        \hline
        \multirow{3}{*}{3} 
                           &  64 & 1.66e-05 & 3.00 & 5.28e-08 & 2.93 & 3.18e-06 & 3.84\\
                           & 128 & 2.08e-06 & 2.99 & 6.92e-09 & 0.23 & 2.22e-07 & 3.88\\
                           & 256 & 2.61e-07 &      & 5.90e-09 &      & 1.50e-08 &     \\
        \hline
        \multirow{5}{*}{4} &  16 & 3.41e-04 & 4.02 & 1.36e-07 & 4.18 & 1.76e-04 & 4.61\\
                           &  32 & 2.10e-05 & 4.00 & 7.51e-09 & 0.26 & 7.22e-06 & 4.76\\
                           &  64 & 1.32e-06 & 3.97 & 6.25e-09 & 0.09 & 2.66e-07 & 4.84\\
                           & 128 & 8.40e-08 & 2.74 & 5.86e-09 &-0.04 & 9.31e-09 & 4.77\\
                           & 256 & 1.26e-08 &      & 6.01e-09 &      & 3.41e-10 &     \\
      \end{tabular}      
    \end{center}
  \end{table}
  shows the results on a mesh consisting of Bakhvalov S-type meshes for all layers. We can confirm the 
  predictions made in Theorem~\ref{thm:conv:Smesh} of convergence in the $\tnorm{\cdot}$-norm of order $q$. 
  Moreover, Remark~\ref{rem:super} mentions as consequence of a supercloseness result an optimal error 
  convergence in $L^2$ of order $q+1$. This can also be observed in Table~\ref{tab:ex1}. 
  Note that for higher values of $N$ and higher polynomial degrees we have a collapse of the computational
  accuracy. This is a well-known phenomenon in singularly perturbed problems, and we cannot achieve 
  machine precision. To get around this, we would have to use quadruple or arbitrary 
  precision.
  
  In Table~\ref{tab:ex1:compare}
  \begin{table}[tb]
    \caption{Errors for example~\eqref{eq:ex}, $q=2$ and $\eps=10^{-4}$ on different meshes.\label{tab:ex1:compare}}
    \begin{center}
      \begin{tabular}{r|rcrcrcrc}
        $N$ & \multicolumn{2}{c}{BS-BS}
            & \multicolumn{2}{c}{BS-Shishkin}
            & \multicolumn{2}{c}{BS-weakeq}
            & \multicolumn{2}{c}{BS-weakShishkin}\\
        \hline
         64 & 4.51e-04 & 2.00 & 4.51e-04 & 2.00 & 4.43e-04 & 2.00 & 3.60e-04 & 2.06\\
        128 & 1.12e-04 & 2.00 & 1.12e-04 & 2.00 & 1.11e-04 & 2.00 & 8.66e-05 & 2.05\\
        256 & 2.81e-05 &      & 2.81e-05 &      & 2.77e-05 &      & 2.09e-05 &     \\  
      \end{tabular}
    \end{center}
  \end{table}
  we compare the results on different meshes. We restrict the polynomial degree to $q=2$ and look only at the $\tnorm{\cdot}$-norm.
  If we use a Bakhvalov S-mesh for the boundary layer it does not matter whether we use a Bakhvalov S-mesh or a Shishkin mesh
  for the inner layer. The results are the same. However, for $q=2$ we can also use an equidistant mesh for the inner layer
  and get slightly better results. Even better results can be obtained by using a weak S-type mesh, such as a weak Shishkin
  mesh. But overall they all perform similarly well and we observe convergence of order $q=2$.
  
  For example \eqref{eq:ex2} we can do the same experiments and observe similar results. We only show the results corresponding to Table~\ref{tab:ex1:compare}
  in Table~\ref{tab:ex2:compare}.
  \begin{table}[htb]
    \caption{Errors for example~\eqref{eq:ex2}, $q=2$ and $\eps=10^{-4}$ on different meshes.\label{tab:ex2:compare}}
    \begin{center}
      \begin{tabular}{r|rcrcrcrc}
        $N$ & \multicolumn{2}{c}{BS-BS}
            & \multicolumn{2}{c}{BS-weakeq}
            & \multicolumn{2}{c}{BS-weakShishkin}\\
        \hline
         64 & 4.12e-04 & 2.00 & 4.04e-04 & 2.00 & 3.10e-04 & 2.07\\
        128 & 1.03e-04 & 2.00 & 1.01e-04 & 2.00 & 7.38e-05 & 2.07\\
        256 & 2.57e-05 &      & 2.52e-05 &      & 1.75e-05 &     \\  
      \end{tabular}
    \end{center}
  \end{table}
%

  \appendix
  \section{Proof of Lemma~\ref{lem:stability}}
  The proof of Lemma~\ref{lem:stability} relies on properties of an associated Green's function
  that we will derive now.
  
  Let $G$ be the Green's function defined for all $t\in(0,1)$ by
  \begin{subequations}\label{eq:Green}
  \begin{align}
    \eps^2 \pt_x^4 G(x,t)-b \pt_x^2G(x,t)+cG(x,t)&=\delta(x-t),\,x\in\Omega=(0,1),\label{eq:Green:eq}\\
    G(0,t)=G(2,t)&=0,\\
    G_x(0,t)=G_x(2,t)&=0.\label{eq:Green:BC2}
  \end{align}
  \end{subequations}
  
  The characteristic solutions of \eqref{eq:Green:eq} are given by 
  \begin{align*}
    y_1(x)&=\e^{-\mu_1 x},\,
    y_2(x)=\e^{-\mu_2 x},\,
    y_3(x)=\e^{-\mu_1 (1-x)},\,
    y_4(x)=\e^{-\mu_2 (1-x)},
  \end{align*}
  where 
  \[
    \mu_1=\sqrt{\frac{b+\sqrt{b^2-4\eps^2 c}}{2\eps^2}}\sim \sqrt{b}\eps^{-1},\,
    \mu_2=\sqrt{\frac{b+\sqrt{b^2-4\eps^2 c}}{2\eps^2}}\sim \sqrt{\frac{c}{b}}.
  \]

  We find a representation of $G$, see e.g. \cite[Sec. 3.3]{Naimark67}, using
  \[
    G(x,t)=\begin{cases}
             \sum\limits_{i=1}^4 c_i^\ell(t)y_i(x),&x<t,\\
             \sum\limits_{i=1}^4 c_i^r(t)y_i(x),&x\geq t,
           \end{cases}
  \]
  where the unknowns $c_i^\ell$ and $c_i^r$ for each $t$ can be found by solving the system
  \begin{align*}
    G(0,t)&=0,&   
    G(1,t)&=0,&
    G_x(0,t)&=0,&
    G_x(1,t)&=0,\\
    \jump{G}(t,t)&=0,&
    \jump{G_x}(t,t)&=0,&
    \jump{G_{xx}}(t,t)&=0,&
    \jump{G_{xxx}}(t,t)&=\eps^{-2}.
  \end{align*}
  These linearly independent conditions determine $G(x,t)$ uniquely for each $t\in(0,1)$.

  Now we want to represent $u_1$ and $u_2$ using $G$. For this purpose we define by $\Phi_i$,\,$i\in\{1,\dots,4\}$
  the Hermite basis of $\PS_3(0,1)$ associated with the evaluation of function and first derivative values
  and interpolate the boundary conditions in \eqref{eq:problem:BC2} by
  \begin{align*}
    \phi_1(x)&=\alpha_1\Phi_1(x)+\alpha_2\Phi_2(x)+\beta_1\Phi_3(x)+\beta_2\Phi_4(x),\,x\in(0,1),\\
    \phi_2(x)&=(\alpha_2+\delta_1)\Phi_1(x-1)+\alpha_3\Phi_2(x-1)+(\beta_2+\delta_2)\Phi_3(x-1)+\beta_3\Phi_4(x-1),\,x\in(1,2).
  \end{align*}
  Then $v_i:=u_i-\phi_i$ satisfies homogeneous boundary conditions and we can reformulate
  the continuity conditions \eqref{eq:cont} for finding $\alpha_2$ and $\beta_2$ as
  \begin{align*}
    \jump{v''}(1)&
                  =\phi''_2(1)-\phi''_1(1)
                  =6(\alpha_3-\alpha_1-\delta_1)-2(\beta_1+4\beta_2+\beta_3+2\delta_2)=:g_1-8\beta_2,\\
    \jump{v'''}(1)&
                   =\phi'''_2(1)-\phi'''_1(1)
                   =-12(\alpha_1-2\alpha_2+\alpha_3-\delta_1)-6(\beta_1-\beta_3-\delta_2)=:g_2+24\alpha_2.
  \end{align*}
  In order to evaluate the remaining jump terms, we represent $v_1$ and $v_2$ using the Green's function
  \begin{subequations}\label{eq:represent}
  \begin{align}
    v_1(x)&=\int_0^1 G(x,t)\left(f(t)+b\phi_1''(t)-c\phi_1(t)\right)\dt,\\
    v_2(x)&=\int_0^1 G(x-1,t)\left(f(t+1)+b\phi_2''(t+1)-c\phi_2(t+1)+d(\phi_1(t)-v_1(t))\right)\dt\notag\\
          &=\int_0^1 G(x-1,t)\bigg(f(t+1)+b\phi_2''(t+1)-c\phi_2(t+1)+d\phi_1(t)\notag\\&\hspace{4cm}
                                   -d\int_0^1 G(t,s)\left(f(s)+b\phi_1''(s)-c\phi_1(s)\right)\ds\bigg)\dt.
  \end{align}
  \end{subequations}
  To be more precise, we have
  \begin{align*}
    v_1(x)&=g_3(x)+\alpha_2 F_1(x)+\beta_2 F_2(x),\\
    v_2(x)&=g_4(x)+\alpha_2 F_3(x)+\beta_2 F_4(x),
  \end{align*}
  where
  \begin{align*}
    g_3(x)&=\int_0^1 G(x,t)(f(t)+b(\alpha_1\Phi_1''(t)+\beta_1\Phi_3''(t))-c(\alpha_1\Phi_1(x)+\beta_1\Phi_3(t)))\dt,\\
    g_4(x)&=\int_0^1 G(x-1,t)\Bigg(f(t+1)+b(\delta_1\Phi_1''(t)+\alpha_3\Phi_2''(t)+\delta_2\Phi_3''(t)+\beta_3\Phi_4''(t))\\&\hspace{5cm}
                                    -c(\delta_1\Phi_1(t)+\alpha_3\Phi_2(t)+\delta_2\Phi_3(t)+\beta_3\Phi_4(t))\\&\hspace{5cm}
                                    +d(\alpha_1\Phi_1(t)+\beta_1\Phi_3(t))
                                    -dg_3(t)\Bigg)\dt,\\
    F_1(x)&=\int_0^1 G(x,t)(b\Phi_2''(t)-c\Phi_2(t))\dt,\\
    F_2(x)&=\int_0^1 G(x,t)(b\Phi_4''(t)-c\Phi_4(t))\dt,\\
    F_3(x)&=\int_0^1 G(x-1,t)\left(b\Phi_1''(t)-c\Phi_1(t)+d\Phi_2(t)-dF_1(t)\right)\dt,\\
    F_4(x)&=\int_0^1 G(x-1,t)\left(b\Phi_3''(t)-c\Phi_3(t)+d\Phi_4(t)-dF_2(t)\right)\dt.
  \end{align*}
  The system we need to solve reads after rescaling
  \begin{gather*}
    \pmtrx{[c]\eps(F_3''(1)-F_1''(1)) & \eps(F_4''(1)-F_2''(1)+8)\\
           \eps^2(F_3'''(1)-F_1'''(1)-24) & \eps^2(F_4'''(1)-F_2'''(1))}
    \pmtrx{\alpha_2\\\beta_2}
    =\pmtrx{[c]\eps(g_1-g_4''(1)+g_3''(1))\\\eps^2(g_2-g_4'''(1)+g_3'''(1))}.
  \end{gather*}
  In the following we show, that for small $\eps$ the coefficient matrix, let us call it $\mA$, is regular and its inverse is $\ord{1}$.
  To do this, we look at the integrals associated with $G$ and compute their leading terms in an $\eps$-expansion using MAPLE.
  We obtain
  \begin{align*}
    \int_0^1 G_{xx}(1,t)\dt   &= \frac{\e-1}{\e+1}\eps^{-1}+\ord{1},&
    \int_0^1 G_{xx}(1,t)t\dt  &= \frac{2}{\e^2-1}\eps^{-1}+\ord{1},\\
    \int_0^1 G_{xx}(1,t)t^2\dt&= \frac{\e^2-4\e+5}{\e^2-1}\eps^{-1}+\ord{1},&
    \int_0^1 G_{xx}(1,t)t^3\dt&= \frac{16-2\e^2}{\e^2-1}\eps^{-1}+\ord{1},\\
    \int_0^1 G_{xx}(0,t)\dt   &= \frac{\e-1}{\e+1}\eps^{-1}+\ord{1},&
    \int_0^1 G_{xx}(0,t)t\dt  &= \frac{e^2-2e-1}{\e^2-1}\eps^{-1}+\ord{1},\\
    \int_0^1 G_{xx}(0,t)t^2\dt&= \frac{2\e^2-6\e+2}{\e^2-1}\eps^{-1}+\ord{1},&
    \int_0^1 G_{xx}(0,t)t^3\dt&= \frac{6\e^2-14\e-6}{\e^2-1}\eps^{-1}+\ord{1}.
  \end{align*}
  Furthermore, it holds for $k\in\{0,\dots,3\}$
  \begin{align*}
    \int_0^1 G_{xxx}(1,t)t^k\dt&=\eps^{-1}\left(\int_0^1 G_{xx}(1,t)t^k\dt+\ord{1}\right),\\
    \int_0^1 G_{xxx}(0,t)t^k\dt&=-\eps^{-1}\left(\int_0^1 G_{xx}(0,t)t^k\dt+\ord{1}\right).
  \end{align*}
  In addition, we also need estimates for the double-integrals
  \begin{align*}
    \int_0^1 G_{xx}(0,t)\int_0^1 G(t,s)\ds\dt    &= \frac{\e^2-2\e-1}{2(1+\e)^2}\eps^{-1}+\ord{1},\\
    \int_0^1 G_{xx}(0,t)\int_0^1 G(t,s)s\ds\dt   &= \frac{\e^4-2\e^3-2\e^2+1}{(\e^2-1)^2}\eps^{-1}+\ord{1},\\
    \int_0^1 G_{xx}(0,t)\int_0^1 G(t,s)s^2\ds\dt &= \frac{3\e^4-10\e^3+4\e^2+4\e-3}{(\e^2-1)^2}\eps^{-1}+\ord{1},\\
    \int_0^1 G_{xx}(0,t)\int_0^1 G(t,s)s^3\ds\dt &= \frac{12\e^4-26\e^3-24\e^2+12\e+12}{(\e^2-1)^2}\eps^{-1}+\ord{1},\\
    \int_0^1 G_{xxx}(0,t)\int_0^1G(t,s)s^k\ds\dt &=-\eps^{-1}\left(\int_0^1 G_{xx}(0,t)\int_0^1G(t,s)s^k\ds\dt+\ord{1}\right),
  \end{align*}
  where $k\in\{0,\dots,3\}$.
  
  With these estimates we obtain for the entries in $\mA$ of the scaled system 
  \begin{align*}
    \eps(F_3''(1)-F_1''(1))   &= 12\frac{3-\e}{\e-1}b
                                +6\frac{3\e^4-8\e^3+1}{(\e^2-1)^2}bd\\&\hspace{0.6cm}
                                +\frac{17\e^2-26\e-39}{\e^2-1}c
                                -\frac{15\e^4-22\e^3-60\e^2+12\e+33}{(\e^2-1)^2}cd\\&\hspace{0.6cm}
                                -2\frac{3\e^2-5\e-9}{\e^2-1}d+\ord{\eps},
  \end{align*}
  \begin{align*}
    \eps(F_4''(1)-F_2''(1)+8) &= 6\frac{\e-3}{\e-1}b
                                -3\frac{3\e^4-8\e^3+1}{(\e^2-1)^2}bd\\&\hspace{0.6cm}
                                -\frac{7\e^2-10\e-21}{\e^2-1}c
                                +\frac{9\e^4-16\e^3-28\e^2+8\e+15}{(\e^2-1)^2}cd\\&\hspace{0.6cm}
                                +4\frac{\e^2-2\e-2}{\e^2-1}d+\ord{\eps},\\
    \eps^2(F_3'''(1)-F_1'''(1)-24) &= 36\frac{\e-1}{\e+1}b
                                     -6\frac{3\e^4-8\e^3+1}{(\e^2-1)^2}bd\\&\hspace{0.6cm}
                                     -\frac{3\e-5}{\e-1}c
                                     +\frac{15\e^4-22\e^3-60\e^2+12\e+33}{\e^2-1}cd\\&\hspace{0.6cm}
                                     +2\frac{3\e^2-5\e-9}{\e^2-1}d+\ord{\eps},
  \end{align*}
  \begin{align*}
    \eps^2(F_4'''(1)-F_2'''(1))    &=-18\frac{\e-1}{\e+1}b
                                     +3\frac{3\e^4-8\e^3+1}{\e^2-1}bd\\&\hspace{0.6cm}
                                     +\frac{\e-1}{\e+1}c
                                     -\frac{9\e^4-16\e^3-28\e^2+8\e+15}{(\e^2-1)^2}cd\\&\hspace{0.6cm}                                     
                                     -4\frac{\e^2-2\e-2}{\e^2-1}d+\ord{\eps}.
  \end{align*}
  Furthermore, for its determinant we have
  \begin{align*}
    \det(\mA)
      &= -4\frac{\e^4 + 4\e^3 - 27\e^2 + 10\e +36}{(\e^2-1)^2} c^2
         -24\frac{\e^4-5\e^3+10\e^2-11\e+3}{(\e^2-1)^2}bd\\&\hspace{0.6cm}
         -24\frac{2\e^4-9\e^3+17\e^2-11\e-3}{(\e^2 -1)^2}bc
        -4\frac{5\e^2-25\e+31}{1-\e^(2)}cd\\&\hspace{0.6cm}
        -4\frac{9\e^4-47\e^3+59\e^2+26\e-54}{(\e^2-1)^2}c^2d
        -6\frac{3\e^4-12\e^3+22\e^2-40\e+23}{(\e^2-1)^2}bcd+\ord{\eps}.
  \end{align*}
  We therefore conclude, that $\mA$ is regular and $\norm{\mA^{-1}}{\infty}\lesssim 1$. Thus
  \[
    \bignorm{\pmtrx{\alpha_2\\\beta_2}}{\infty}\lesssim\bignorm{\pmtrx{[c]\eps(g_1-g_4''(1)+g_3''(1))\\\eps^2(g_2-g_4'''(1)+g_3'''(1))}}{\infty},
  \]
  and for the final stability result we have to estimate the right hand side. For that let us collect
  the absolute values of the data:
  \[
    \kappa:=|\alpha_1|+|\alpha_3|+|\delta_1|+|\delta_2|+|\beta_1|+|\beta_3|.
  \]
  Then the right hand side can be estimated by
  \begin{align*}
    \eps|g_1|        &\lesssim \eps \kappa,\quad
    \eps^2|g_2|       \lesssim \eps^2\kappa,\\
    \norm{g_3}{L^\infty(0,1)} &\lesssim \sup_{x\in(0,1)}\int_0^1 G(x,t)\dt(\norm{f}{L^\infty(0,1)}+|\alpha_1|+|\beta_1|),
  \end{align*}
  \begin{align*}
    \eps|g_3''(1)|   &\lesssim \eps\int_0^1 |G_{xx}(1,t)|\dt(\norm{f}{L^\infty(0,1)}+|\alpha_1|+|\beta_1|),\\
    \eps^2|g_3'''(1)|&\lesssim \eps^2\int_0^1 |G_{xxx}(1,t)|\dt(\norm{f}{L^\infty(0,1)}+|\alpha_1|+|\beta_1|),\\
    \eps|g_4''(1)|&\lesssim\eps\int_0^1 |G_{xx}(0,t)|\dt\Bigg(\norm{f}{L^\infty(1,2)}+\kappa+\norm{g_3}{L^\infty(0,1)}\Bigg),\\
    \eps^2|g_4'''(1)|&\lesssim\eps^2\int_0^1 |G_{xxx}(0,t)|\dt\Bigg(\norm{f}{L^\infty(1,2)}+\kappa+\norm{g_3}{L^\infty(0,1)}\Bigg),\\
  \end{align*}
  The final ingredients are the following estimates on the Green's function
  \begin{align*}
    0\leq\int_0^1 G(x,t)\dt
     \leq \int_0^1 G\left(\frac{1}{2},t\right)\dt
     &=\frac{(\e^{1/2}-1)^2}{\e+1}+\ord{\eps},\\
    \int_0^1|G_{xx}(0,t)|\dt+\int_0^1|G_{xx}(1,t)|\dt&\lesssim \eps^{-1},\\
    \int_0^1|G_{xxx}(0,t)|\dt+\int_0^1|G_{xxx}(1,t)|\dt&\lesssim \eps^{-2}.
  \end{align*}
  So for the unknown values at $x=1$ we get
  \[
    |\alpha_2|+|\beta_2|\lesssim \norm{f}{L^\infty(0,2)}+\kappa.
  \]
  Now using $u_i=v_i+\phi_i$ and the representation formulae \eqref{eq:represent}
  we finally obtain
  \[
    \norm{u}{L^\infty(0,2)}\lesssim \norm{f}{L^\infty(0,2)}+\kappa.
  \]
  
  The above result is for the case $m=1$ -- first derivatives as boundary data. For $m=2$ we can follow the same procedure.
  Now \eqref{eq:problem:BC2} is replaced by
  \begin{align*}
    u_1''(0)=\beta_1,\,
    u_1''(1)&=\beta_2=
    u_2''(1),\,
    u_2''(2)=\beta_3,
  \end{align*}
  and $\alpha_2,\,\beta_2$ are chosen such that
  \[
    \jump{u'}(1)=\delta_2,\,
    \jump{u'''}(1)=0.
  \]

  To define the associated Green's function, we change the conditions \eqref{eq:Green:BC2} to
  \begin{align*}
    G_{xx}(0,t)&=0,\,
    G_{xx}(1,t)=0
  \end{align*}
  and find a similar representation of $G$.
  In order to represent $u$ by $G$, we again write $v_i=u_i-\phi_i$, where
  \begin{align*}
    \phi_1(x)&=\alpha_1\Psi_1(x)+\alpha_2\Psi_2(x)+\beta_1\Psi_3(x)+\beta_2\Psi_4(x),\,x\in(0,1),\\
    \phi_2(x)&=(\alpha_2+\delta_1)\Psi_1(x-1)+\alpha_3\Psi_2(x-1)+\beta_2\Psi_3(x-1)+\beta_3\Psi_4(x-1),\,x\in(1,2)
  \end{align*}
  and $\{\Psi_k\}$ are the Hermite basis of $\PS_3$ associated with the evaluation of function and second derivative values.
  The rest of the steps are similar and so is the result.

\end{document}